\newcommand{\bea}{\begin{eqnarray}}
\newcommand{\eea}{\end{eqnarray}}
\def\beaa{\begin{eqnarray*}}
\def\eeaa{\end{eqnarray*}}
\def\ba{\begin{array}}
\def\ea{\end{array}}
\def\be#1{\begin{equation} \label{#1}}
\def \eeq{\end{equation}}
\def\be{{\beta}}
\def\R{{\mathbb{R}}}
\newtheorem{theorem}{Theorem}[section]
\newtheorem{lemma}[theorem]{Lemma}
\numberwithin{equation}{section}
\numberwithin{equation}{section}
\begin{document}

\title{Remark on the scattering theory of the nonlinear Schr\"odinger equation on the cylinders   }
\author{Xing Cheng$^{*}$   and Jiqiang Zheng$^{**}$                       }

\thanks{$^*$ School of Mathematics, Hohai University, Nanjing 210098, Jiangsu,  China. \texttt{ chengx@hhu.edu.cn}}

\thanks{$^{**}$  Institute of Applied Physics and Computational Mathematics and National Key Laboratory of Computational Physics, Beijing 100088, China.
\texttt{zheng\_jiqiang@iapcm.ac.cn}}

\thanks{$^{*}$ X. Cheng has been partially supported by the NSF of Jiangsu (Grant No. BK20221497).    }	
\thanks{$^{**}$  J. Zheng was supported by National key R\&D program of China 2021YFA1002500, NSFC Grant 12271051 and Beijing Natural Science Foundation 1222019}

\begin{abstract}
In this article, we consider the nonlinear Schr\"odinger equation on the cylinder $\mathbb{R}^d\times \mathbb{T}$.
In the long range case, we show there is no linear scattering state of the nonlinear Schr\"odinger equation on $\mathbb{R}^d \times \mathbb{T}$. In the short range case, we show the decay and scattering of
solutions of the nonlinear Schr\"odinger equation on $\mathbb{R}^d \times \mathbb{T}$ for small data.

\bigskip

\noindent \textbf{Keywords}: Nonlinear Schr\"odinger equation, scattering, cylinder

\bigskip

\noindent \textbf{Mathematics Subject Classification (2020)} Primary: 35Q55; Secondary: 35B40, 35P25

\end{abstract}
\maketitle
%
%

\section{Introduction}
In this article, we consider the defocusing nonlinear Schr\"odinger equation (NLS) posed on the cylinder $\mathbb{R}^d \times \mathbb{T}$ for $d \ge 1$:
\begin{equation}\label{eq1.2n}
i\partial_t u +  \Delta_{\mathbb{R}^d \times \mathbb{T}}  u =  |u|^{p-1} u,
\end{equation}
where $u(t,y,x) : \mathbb{R} \times \mathbb{R}^d \times \mathbb{T}  \to \mathbb{C}$ is an unknown function, and $1< p < 1 + \frac{4}d$ when $d \ge 1$.

Equation \eqref{eq1.2n} has the following conserved quantities:
\begin{align*}
\text{mass: }    &\quad   \mathcal{M}(u(t))  = \int_{\mathbb{R}_y^d   \times \mathbb{T}_x } |u(t,y,x )|^2\,\mathrm{d}y \mathrm{d}x ,\\
\text{   energy:  }     &  \quad \mathcal{E}(u(t))  = \int_{\mathbb{R}_y^d   \times \mathbb{T}_x }\left( \frac12 |\nabla u(t,y, x )|^2  + \frac1{p+1} |u(t,y, x )|^{p+1} \,\mathrm{d}y\right) \mathrm{d}x .
\end{align*}
The equation \eqref{eq1.2n} in lower dimensions describes wave propagation in nonlinear and dispersive media.
 It also figures in the time-dependent Landau-Ginzburg model of phase transitions.

In the last two decades, there are a lot works on the well-posedness and scattering of defocusing NLS on $\mathbb{R}^d \times \mathbb{T} $.
On one hand, when considering the well-posedness of the Cauchy problem \eqref{eq1.2n}, intuitively, it is determined by the local geometry of the manifold $\mathbb{R}^d \times \mathbb{T} $. The manifold is locally $\mathbb{R}^{d + 1}  $, so the well-posedness is the same as the Euclidean case, that is when $1<  p \le 1 + \frac4{d -1}$ the global well-posedness is expected. Just as the Euclidean case, we say the equation is energy-subcritical when $ 1< p < 1 + \frac4{ d -1}$, $ d \ge 1$ and energy-critical when $p = 1 + \frac4{d -1}$, $ d \ge 2$.
On the other hand, when considering the scattering of \eqref{eq1.2n}, scattering is expected to be determined by the asymptotic volume growth of a ball with radius $r$ in the manifold $\mathbb{R}^d\times \mathbb{T} $ when $r\to \infty$. From the heuristic that linear solutions with frequency $\sim N$ initially localized around the origin will disperse at time t in the ball of radius $\sim N t$, scattering is expected to be determined by the asymptotic volume growth of balls with respect to their radius. Since $\inf_{ z \in \mathbb{R}^d \times \mathbb{T}  } \text{Vol}_{\mathbb{R}^d \times \mathbb{T} } (B(z,r)) \sim r^d, \text{ as } r \to \infty$, the linear solution is expected to decay at a rate $\sim t^{-\frac{d}2}$ and based on the scattering theory on $\mathbb{R}^d$,
the solution of \eqref{eq1.2n} is expected to scatter for $ p \ge 1 + \frac4d$. Moreover,
scattering in the small data case is expected for $ 1 + \frac2d < p < 1 + \frac4d$ when $d \ge 1$. 

 Therefore, regarding heuristic on the well-posedness and scattering, the solution of \eqref{eq1.2n} globally exists and scatters in the range $ 1 + \frac4d \le p \le 1 + \frac4{ d-1}$. For $ 1 + \frac2d < p < 1 + \frac4d$ when $d \ge 1$, scattering is expected as in the Euclidean space case for small data. 
 This heuristic has been justified in \cite{CGYZ,CGZ,TT,HPTV,TV2,Z2}. In this paper, we are interested in the scattering theory of \eqref{eq1.2n} when $1< p < 1 + \frac4d$. It is believed that in the long range case when $1 < p < 1 + \frac2d$, the solutions to \eqref{eq1.2n} do not scatter. And it is also expected that
   in 
  the short range case when $1 + \frac2d < p < 1 + \frac4d$, the solutions to \eqref{eq1.2n} scatter for small data. Therefore, we will give a rigorous proof of these facts.

In the following, we present the main result of this paper.

\begin{theorem}
Let $1 < p < 1 + \frac4d$ and $d \ge 1$. Then, the following statements hold true:
\begin{itemize}
\item If $1< p \leq\min\big\{2,1 + \tfrac{2}d\big\}$, then, the solutions to \eqref{eq1.2n} do not scatter in $L_{y, x }^2( \mathbb{R}^d \times \mathbb{T})$. 
\item 
If $1 + \frac2d < p < 1 + \frac4d$, then the solution to \eqref{eq1.2n} scatters in $H^1$ for small data in $\Sigma$, where
 $\Sigma$ is defined by
 \begin{align*}
\Sigma : = \left\{ f \in H_{y,x}^2: \|f\|_\Sigma := \left\| \langle y  \rangle^2 f \right\|_{L^2_{y,x} } +  \left\| \langle y \rangle \nabla f \right\|_{L^2_{y,x} } + \|f \|_{H^2_{y,x} } < \infty \right\}.
\end{align*}

\end{itemize}

\end{theorem}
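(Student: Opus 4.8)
The plan is to treat the two bullets separately, since they use genuinely different ideas.

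The plan is to prove the two bullets by quite different means. For the first (non-scattering in the long-range regime), I would argue by contradiction: assume a solution $u$ with $\mathcal{M}(u)\neq 0$ scatters in $L^2_{y,x}$, i.e. $e^{-it\Delta}u(t)\to u_+$ in $L^2$ as $t\to\infty$ with $\|u_+\|_{L^2}=\mathcal{M}(u)^{1/2}>0$. Pairing \eqref{eq1.2n} against the linear solution $w(t)=e^{it\Delta}u_+$ and integrating by parts in $(y,x)$ to cancel the $\Delta$-terms gives $\frac{d}{dt}\langle u(t),w(t)\rangle=-i\langle |u|^{p-1}u(t),w(t)\rangle$; since $\langle u(t),w(t)\rangle=\langle e^{-it\Delta}u(t),u_+\rangle\to\|u_+\|_{L^2}^2$, the improper integral $\int_0^\infty\langle |u|^{p-1}u(s),e^{is\Delta}u_+\rangle\,\mathrm{d}s$ converges. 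I would then contradict this by a dispersive lower bound: restricting to a single $x$-Fourier mode of $u_+$ reduces matters to the $\mathbb{R}^d$ profile $e^{is\Delta_y}a_k\approx (2is)^{-d/2}e^{i|y|^2/4s}\widehat{a_k}(y/2s)$, so $\|e^{is\Delta}u_+\|_{L^{p+1}_{y,x}}^{p+1}\gtrsim s^{-d(p-1)/2}$; and since $u(s)\approx e^{is\Delta}u_+$ for large $s$, one has $\mathrm{Re}\,\langle |u|^{p-1}u(s),e^{is\Delta}u_+\rangle=\|e^{is\Delta}u_+\|_{L^{p+1}}^{p+1}+O(\text{error})$, so $\int_0^\infty\mathrm{Re}\,\langle |u|^{p-1}u(s),e^{is\Delta}u_+\rangle\,\mathrm{d}s$ diverges to $+\infty$ precisely when $p\le 1+\tfrac2d$ — contradiction.

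The main difficulty in this first part is controlling the error $\langle |u|^{p-1}u-|v|^{p-1}v,v\rangle$ with $v=e^{is\Delta}u_+$, since the convergence $\|u(s)-v(s)\|_{L^2}=o(1)$ comes without a rate and $u_+$ is only in $L^2$. I would handle this by reducing first to $u_+$ in a dense class (Schwartz in $y$, finitely many $x$-modes), integrating from a large time $T$ beyond which $\|u(s)-v(s)\|_{L^2}$ is as small as desired, and estimating $|\langle|u|^{p-1}u-|v|^{p-1}v,v\rangle|\lesssim\int(|u|^{p-1}+|v|^{p-1})|u-v|\,|v|$ by H\"older with the factor $|u|^{p-1}+|v|^{p-1}$ placed in $L^{2/(p-1)}_{y,x}$ and $|u-v|,|v|$ in $L^2$ (using mass conservation and the assumed $L^2$ scattering, after interpolation, to control the $L^{2/(p-1)}$ factor). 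It is exactly here that the hypothesis $p\le 2$ enters — it guarantees $2/(p-1)\ge 2$ so that this H\"older splitting is admissible — which is why the statement carries $p\le\min\{2,1+\tfrac2d\}$ rather than $p\le 1+\tfrac2d$.

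For the second bullet (small-data scattering in $H^1$ for data in $\Sigma$), the scheme is the Galilean vector-field method adapted to the waveguide. \textbf{Linear estimates.} Since $e^{it\Delta_x}$ is only unitary on $L^2(\mathbb{T})$, I would tensor the $\mathbb{R}^d$ dispersive estimate with $H^1(\mathbb{T})\hookrightarrow L^\infty(\mathbb{T})$ to get $\|e^{it\Delta}f\|_{L^\infty_{y,x}}\lesssim |t|^{-d/2}\|\langle\nabla_x\rangle f\|_{L^1_yL^2_x}$, interpolate with mass conservation for $L^r_yL^\infty_x$ bounds, and record the corresponding mixed-norm Strichartz inequalities — each costing at most one or two $x$-derivatives, which $\Sigma\subset H^2_{y,x}$ supplies. \textbf{The operator $J(t)=y+2it\nabla_y$.} It acts only on $y$, commutes with $i\partial_t+\Delta_{\mathbb{R}^d\times\mathbb{T}}$ (since $J$ ignores $\Delta_x$), satisfies $J(0)=y$, obeys the Leibniz identity $J(t)(|u|^{p-1}u)=\tfrac{p+1}{2}|u|^{p-1}J(t)u-\tfrac{p-1}{2}u^2|u|^{p-3}\overline{J(t)u}$ so that $|J(t)(|u|^{p-1}u)|\lesssim|u|^{p-1}|J(t)u|$, and yields a weighted Gagliardo--Nirenberg inequality $\|f\|_{L^{p+1}_{y,x}}\lesssim|t|^{-\theta}\|\langle\nabla_x\rangle J(t)f\|_{L^2}^{\theta}\|\langle\nabla_x\rangle f\|_{L^2}^{1-\theta}$ with $\theta=d(\tfrac12-\tfrac1{p+1})<1$, proved by applying the $\mathbb{R}^d$ Gagliardo--Nirenberg inequality slicewise in $x$ to $e^{-i|y|^2/4t}f$, then Sobolev ($H^1_x(\mathbb{T})\hookrightarrow L^{p+1}_x$) and H\"older in $x$. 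The two weights and the $H^2$-regularity in $\Sigma$ are precisely what make this finite at $t=0$.

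\textbf{A priori bounds, contraction, and scattering.} The equation is energy-subcritical, so $H^1$ global well-posedness and the persistence of $\Sigma$ follow from local Strichartz theory and the conservation laws; I would then close a bootstrap/contraction in a norm of the shape $\|u\|_X=\sup_t\big(\|u(t)\|_{H^1_{y,x}}+\|J(t)u(t)\|_{L^2}+\|J(t)\nabla u(t)\|_{L^2}+\langle t\rangle^{d/2}\|u(t)\|_{L^\infty_{y,x}}\big)$ (using a finite $L^r_yL^\infty_x$ in place of $L^\infty_{y,x}$ when $d$ is large), deducing $\|u\|_X\lesssim\|u_0\|_\Sigma+\|u\|_X^p$, hence $\|u\|_X\lesssim\|u_0\|_\Sigma$ for small data. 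The $X$-bound together with the weighted Gagliardo--Nirenberg inequality gives $\|u(t)\|_{L^{p+1}_{y,x}}\lesssim\langle t\rangle^{-\theta}$, and then $\{e^{-it\Delta}u(t)\}$ is Cauchy in $H^1$ as $t\to\pm\infty$ by estimating $\big\|\int_{t_1}^{t_2}e^{-is\Delta}\langle\nabla\rangle(|u|^{p-1}u)\,\mathrm{d}s\big\|_{L^2}$ through dual Strichartz and this decay; the limits $u_\pm\in H^1$ are the scattering states. The crux — and the only place the hypothesis $p>1+\tfrac2d$ is consumed — is the weighted energy estimate $\|J(t)u(t)\|_{L^2}\lesssim\|yu_0\|_{L^2}+\int_0^t\|u(s)\|_{L^\infty_{y,x}}^{p-1}\|J(s)u(s)\|_{L^2}\,\mathrm{d}s$, which closes by Gronwall exactly because $\int_0^\infty\langle s\rangle^{-d(p-1)/2}\,\mathrm{d}s<\infty\iff p>1+\tfrac2d$. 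The accompanying technical obstacle is the bookkeeping forced by the non-dispersive $\mathbb{T}$-factor: one must check that every slicewise-$\mathbb{R}^d$ estimate followed by $x$-Sobolev, and every propagated weighted-derivative quantity (only $u,\nabla u,J(t)u,J(t)\nabla u$ and a $\langle y\rangle^2$-type quantity are available, \emph{not} e.g. $\langle y\rangle\nabla^2 u$), stays within the budget of two $x$-derivatives and the weights that $\Sigma$ genuinely provides.
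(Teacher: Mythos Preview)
Your treatment of the first bullet is essentially the paper's own argument: contradiction via the pairing $\langle u(t),e^{it\Delta}h\rangle$, a dispersive lower bound $\|e^{it\Delta}h\|_{L^{p+1}_{y,x}}^{p+1}\gtrsim t^{-d(p-1)/2}$ (the paper's Lemma~2.2), and the error estimate $|\langle|u|^{p-1}u-|v|^{p-1}v,v\rangle|$ controlled by H\"older with $|u|^{p-1}+|v|^{p-1}\in L^{2/(p-1)}$, which is exactly where $p\le 2$ enters.

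For the second bullet your route is \emph{genuinely different} from the paper's. The paper follows McKean--Shatah and works with the \emph{fractional} operator
\[
|J(t)|^{s}u \;=\; M(t)\,(-t^{2}\Delta_{\mathbb{R}^d})^{s/2}\,M(-t)\,e^{-it\Delta_{\mathbb T}}u,\qquad s=\Big(\tfrac d2\big(1-\tfrac1p\big)\Big)^{+}<2,
\]
proves a resolvent-based ``Sobolev'' lemma $\|u\|_{L^{2p}_y}\lesssim\|(-\Delta_y)^{s/2}u\|_{L^2_y}+\|(-\Delta_y)^{s/2}u\|_{L^2_y}^{1-\eta}\|u\|_{L^2_y}^{\eta}$ (their Lemma~3.3), and closes a Strichartz bootstrap on $\||J(t)|^{s}u\|_{L^\infty_tL^2_yH^1_x}$. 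You instead use the \emph{integer} vector field $J(t)=y+2it\nabla_y$, its Leibniz identity $|J(|u|^{p-1}u)|\lesssim|u|^{p-1}|Ju|$, and slicewise Gagliardo--Nirenberg. Both schemes are correct in principle; the trade-off is this: the fractional choice $s=(\tfrac d2(1-\tfrac1p))^{+}$ delivers exactly the $L^{2p}_y$ decay needed and keeps $s<2$ uniformly in $d$ and $p$, at the cost of losing the Leibniz rule (hence the resolvent decomposition in their Lemma~3.3 and the more involved nonlinear estimate Lemma~3.5). Your integer-$J$ route keeps the nonlinear estimate pointwise and elementary, but one $J$ only yields $L^r_y$ decay up to $r=2d/(d-2)$; your displayed Gr\"onwall step $\|Ju\|_{L^2}\lesssim\|yu_0\|_{L^2}+\int_0^t\|u\|_{L^\infty_{y,x}}^{p-1}\|Ju\|_{L^2}$ therefore does \emph{not} close as written for $d\ge 4$ (two weights in $\Sigma$ give at most $H^2_y$, and $H^2(\mathbb{R}^d)\not\hookrightarrow L^\infty$ for $d\ge 4$). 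The fix---which you allude to with ``a finite $L^r_yL^\infty_x$ in place of $L^\infty_{y,x}$''---is to replace the energy/Gr\"onwall closure by a Strichartz closure placing $|u|^{p-1}$ in $L^{q_1'}_tL^{\tilde r}_y$ (this is the classical Tsutsumi--Yajima argument on $\mathbb{R}^d$, transported to the waveguide via $H^1_x\hookrightarrow L^\infty_x$). With that correction your plan is sound and constitutes a legitimate alternative to the paper's fractional-$|J|^s$ proof.
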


The argument of the proof of the non-scattering part is based on the argument of W. A. Strauss \cite{St} for the nonlinear Schr\"odinger equation on the Euclidean space, which relys on the dispersive estimate of $e^{it \Delta_{\mathbb{R}^d \times \mathbb{T}}}$.
The proof of small data scattering is based on the commutator method introduced by H. P. McKean and J. Shatah \cite{MS}. To show small data scattering, we can reduce the scattering to the decay estimates
\begin{align}\label{eq1.4v24}
\|u(t, y , x ) \|_{L_y^\infty H_x^1 ( \mathbb{R}^d \times \mathbb{T} ) } \le C t^{- \frac{d}2}.
\end{align}
Then by introducing a vector field $ \left( |J(t) |^s  u  \right) (t,y, x) $ which is roughly $ \left( t^s  \left( - \Delta_y  \right)^\frac{s}2 e^{-it \Delta_x } u \right) (t, y, x ) $, it suffices to prove
\begin{align}\label{eq1.5v24}
\|u(t,y , x ) \|_{L_y^\infty H_x^1 } \le C t^{- s}  \left\| |J(t) |^s u(t, y, x )  \right\|_{  L_y^2 H_x^1 }^\theta \|u (t) \|_{L_y^2 H_x^1 }^{1- \theta}.
\end{align}
Moreover, $|J(t) |^s u(t) $ satisfies
\begin{align*}
 \left( i \partial_t   + \Delta_y  \right) |J(t) |^s u (t)  =
 |J(t) |^s  e^{-it \Delta_x}  \left( |u(t) |^{p-1} u (t)   \right) . 
\end{align*}
First, we find that the $L_y^{2p} H_x^1 $ decay estimate, roughly
\begin{align}\label{eq1.6v24}
\|u (t,y,x) \|_{L^{2p}_y H_x^1 } \le C t^{- \frac{d}2  \left( 1- \frac1p \right)}  \left\| |J(t) |^s u(t,y, x)   \right\|_{L_t^\infty L_y^2 H_x^1 },
\end{align}
where $s = \frac{d}2  \left( 1- \frac1p \right)$, is enough for scattering. Since $s < 2$ for $p <  1 + \frac4{d}$, each term in the equation of $|J(t) |^s u$ can be estimated easily in our case. Second, to establish \eqref{eq1.6v24}, we transform it to the corresponding estimate of the inverse of $|J(t) |^s$, which can be reduced to the $L^p$ estimate of resolvent.

\subsection{Notation and Preliminaries}
We will use the notation $\mathbb{T} = \mathbb{R}/(2\pi \mathbb{Z})$ is torus. In the following, we will frequently use some space-time norm, we now give the definition of it.

For any time interval $I \subseteq \mathbb{R}$, $u(t,y, x ): I \times \mathbb{R}^d   \times \mathbb{T} \to \mathbb{C}$, define the space-time norm
\begin{align*}
\|u\|_{L_t^q L_y^r L_x^2(I \times \mathbb{R}^d   \times \mathbb{T}) }  & = \left\|\left\|\Big( \int_{\mathbb{T}}   |u(t,y, x )|^2 \,\mathrm{d}x \Big)^\frac12\right\|_{L_y^r(\mathbb{R}^d )}\right\|_{L_t^q (I)},\\
\|u\|_{H^1_{y, x }( \mathbb{R}^d   \times \mathbb{T})}  & =  \left\|\langle \nabla_x \rangle u \right\|_{L_{y, x }^2( \mathbb{R}^d   \times \mathbb{T})} +  \left\|\langle \nabla_y \rangle u \right\|_{L_{y, x }^2( \mathbb{R}^d   \times \mathbb{T})}.
\end{align*}
We will frequently use the partial Fourier transform: For $f(y, x ): \mathbb{R}^d   \times \mathbb{T} \to \mathbb{C}$,
\begin{align*}
 \left( \mathcal{F}_y  f \right) (\xi,x ) = \frac1{(2\pi)^\frac{d}2} \int_{\mathbb{R}^d } e^{-i y \cdot \xi} f(y, x ) \,\mathrm{d}y .
\end{align*}
We denote  $a^\pm$ to be any quantity of the form $a\pm\epsilon$ for any
$\epsilon>0.$

\section{Nonexistence of the linear scattering state in the long range case 
}
In the section, we discuss the long time behaviour of \eqref{eq1.2n} in the long range case for $1< p < 1 + \frac{2}d$ when $d \ge 2$ and $1 < p \le 2$ when $d = 1$. We will show the only asymptotically free solution to \eqref{eq1.2n}  is identically zero.

\begin{theorem}\label{th3.1v11}
If $u$ is a solution of \eqref{eq1.2n}, then for any $h \in L_{y,x}^2(\mathbb{R}^d \times \mathbb{T})$,
\begin{align*}
\left\|u(t)  - e^{ it\Delta_{\mathbb{R}^d \times \mathbb{T}}} h \right\|_{L_{y, x }^2(\mathbb{R}^d \times \mathbb{T})} \nrightarrow 0, \text{ as } t \to \infty.
\end{align*}
\end{theorem}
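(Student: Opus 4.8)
The plan is to run W.~A.~Strauss's contradiction argument, the decisive input being that in the long range regime $1<p\le\min\{2,1+\tfrac2d\}$ the dispersive decay of $e^{it\Delta_{\mathbb{R}^d\times\mathbb{T}}}$ is too slow for the Duhamel term to be integrable in time. Assume $u\not\equiv0$ and $\|u(t)-e^{it\Delta}h\|_{L^2_{y,x}}\to0$ as $t\to\infty$. Conservation of mass gives $\|h\|_{L^2_{y,x}}=\|u(0)\|_{L^2_{y,x}}>0$, so $h\neq0$; conservation of energy and mass bounds $\|u(t)\|_{H^1_{y,x}}$, hence $e^{-it\Delta}u(t)$ is bounded in $H^1_{y,x}$ and converges to $h$ in $L^2_{y,x}$, so $h\in H^1_{y,x}$. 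By Gagliardo--Nirenberg the convergence $u(t)\to e^{it\Delta}h$ then holds in $L^r_{y,x}$ for all $2\le r<\tfrac{2(d+1)}{d-1}$ as well, in particular in $L^{p+1}_{y,x}$ and $L^{2p}_{y,x}$, the equation being energy-subcritical. The Duhamel formula $e^{-it\Delta}u(t)=u(0)-i\int_0^t e^{-is\Delta}\bigl(|u|^{p-1}u\bigr)(s)\,\mathrm ds$ is meaningful ($|u|^{p-1}u\in C_tL^2_{y,x}$ by Sobolev), and the scattering hypothesis forces $\int_0^\infty e^{-is\Delta}\bigl(|u|^{p-1}u\bigr)(s)\,\mathrm ds$ to converge in $L^2_{y,x}$; pairing against $h$ yields
\[
\int_t^\infty\bigl\langle (|u|^{p-1}u)(s),\,e^{is\Delta}h\bigr\rangle_{L^2_{y,x}}\,\mathrm ds\longrightarrow0\qquad(t\to\infty).
\]

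Next I would split $\bigl\langle(|u|^{p-1}u)(s),e^{is\Delta}h\bigr\rangle=\|e^{is\Delta}h\|_{L^{p+1}_{y,x}}^{p+1}+R(s)$. By Hölder and $\bigl||a|^{p-1}a-|b|^{p-1}b\bigr|\lesssim(|a|^{p-1}+|b|^{p-1})|a-b|$ (the hypothesis $p\le2$, which only constrains $d=1,2$, keeping the nonlinearity sub-quadratic and streamlining every such estimate),
\[
|R(s)|\lesssim\bigl(\|u(s)\|_{L^{p+1}_{y,x}}^{p-1}+\|e^{is\Delta}h\|_{L^{p+1}_{y,x}}^{p-1}\bigr)\,\|u(s)-e^{is\Delta}h\|_{L^{p+1}_{y,x}}\,\|e^{is\Delta}h\|_{L^{p+1}_{y,x}}.
\]
For the main term I would prove the sharp \emph{lower} dispersive bound $\liminf_{s\to\infty}s^{\frac d2(p-1)}\|e^{is\Delta}h\|_{L^{p+1}_{y,x}}^{p+1}>0$ for $h\neq0$. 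Using $e^{is\Delta_{\mathbb{R}^d\times\mathbb{T}}}=e^{is\Delta_{\mathbb{R}^d}}\otimes e^{is\Delta_{\mathbb{T}}}$, unitarity of $e^{is\Delta_{\mathbb{T}}}$ on $L^2_x$, and $\|f\|_{L^{p+1}_x}\gtrsim\|f\|_{L^2_x}$ on the compact $\mathbb{T}$, one reduces to $\|e^{is\Delta_{\mathbb{R}^d}}h\|_{L^{p+1}_yL^2_x}$, for which the $e^{is\Delta_{\mathbb{R}^d}}=M(s)D(s)\mathcal F M(s)$ (stationary-phase) decomposition applied to the $L^2_x$-valued datum $h$ gives $\|e^{is\Delta_{\mathbb{R}^d}}h\|_{L^{p+1}_yL^2_x}^{p+1}\sim(2s)^{-\frac d2(p-1)}\bigl\|\,\|(\mathcal F_yh)(\eta,\cdot)\|_{L^2_x}\,\bigr\|_{L^{p+1}_\eta}^{p+1}$, whose coefficient is positive exactly because $h\neq0$. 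Since $1<p\le\min\{2,1+\tfrac2d\}$ forces $\tfrac d2(p-1)\le1$, we obtain $\int_1^\infty\|e^{is\Delta}h\|_{L^{p+1}_{y,x}}^{p+1}\,\mathrm ds=+\infty$, which together with the convergent integral above yields a contradiction once $\int_1^\infty|R(s)|\,\mathrm ds$ is shown negligible.

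The negligibility of the error is the main obstacle, because $\|u(s)-e^{is\Delta}h\|_{L^{p+1}_{y,x}}\to0$ by itself carries no rate. I would supply the rate by reinjecting the scattering hypothesis into the equation: the global Strichartz estimates for $e^{it\Delta_{\mathbb{R}^d\times\mathbb{T}}}$ (consequences of the dispersive estimate $\|e^{is\Delta}\|_{L^1_yL^2_x\to L^\infty_yL^2_x}\lesssim|s|^{-d/2}$) applied to $u(t)-e^{it\Delta}h=i\int_t^\infty e^{i(t-s)\Delta}(|u|^{p-1}u)(s)\,\mathrm ds$ upgrade the $L^2$-convergence to quantitative decay of $u-e^{i\cdot\Delta}h$ in suitable space--time norms; then Hölder in $s$ over dyadic intervals $[T,2T]$ closes the estimate, since the dispersive gains already harvested on $\|e^{is\Delta}h\|_{L^{p+1}_{y,x}}$ dominate the now-quantified decay of $\|u(s)-e^{is\Delta}h\|$. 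Alternatively the missing rate can be read off from a pseudoconformal/virial identity adapted to the cylinder: $|y|^2$ commutes appropriately with $\Delta_x$, the vector field $J(t)=y+2it\nabla_y$ commutes with $i\partial_t+\Delta_{\mathbb{R}^d\times\mathbb{T}}$, and the resulting monotonicity of $\|J(t)u(t)\|_{L^2_{y,x}}^2+\tfrac{8t^2}{p+1}\|u(t)\|_{L^{p+1}_{y,x}}^{p+1}$ for $1<p<1+\tfrac4d$ forces either $\|u(t)\|_{L^{p+1}_{y,x}}\lesssim t^{-2/(p+1)}$ or $\|J(t)u(t)\|_{L^2_{y,x}}\to\infty$, the latter being incompatible with $L^2$-scattering to a nonzero state (weak lower semicontinuity gives $\liminf_t\|\nabla_yu(t)\|_{L^2_{y,x}}\ge\|\nabla_yh\|_{L^2_{y,x}}>0$). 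Either way, the quantitative comparison of $|u|^{p-1}u$ with the free profile $|e^{is\Delta}h|^{p-1}e^{is\Delta}h$ is the technical heart; the remainder is bookkeeping with the dispersive and Strichartz estimates together with the conservation laws.
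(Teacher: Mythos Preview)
Your skeleton is exactly the Strauss contradiction argument the paper uses, and your lower bound $\liminf_{s\to\infty}s^{\frac d2(p-1)}\|e^{is\Delta}h\|_{L^{p+1}_{y,x}}^{p+1}>0$ is the content of the paper's Lemma~\ref{le3.2v11}. The divergence is in how you control the error $R(s)$.

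Your H\"older estimate
\[
|R(s)|\lesssim\bigl(\|u(s)\|_{L^{p+1}}^{p-1}+\|e^{is\Delta}h\|_{L^{p+1}}^{p-1}\bigr)\,\|u(s)-e^{is\Delta}h\|_{L^{p+1}}\,\|e^{is\Delta}h\|_{L^{p+1}}
\]
is wasteful: the dispersive factors contribute only $s^{-\frac d2\frac{p(p-1)}{p+1}}$, strictly slower than the main term $s^{-\frac d2(p-1)}$, and this is what forces you to seek a rate on $\|u-e^{is\Delta}h\|$. The paper avoids this entirely by a sharper H\"older split that exploits $p\le2$: writing $v=e^{is\Delta}h$ and $|v|=|v|^{2-p}|v|^{p-1}$,
\[
|R(s)|\lesssim\int(|u|+|v|)^{p-1}|v|^{2-p}\,|v|^{p-1}\,|u-v|
\le\|v\|_{L^\infty_{y,x}}^{p-1}\bigl(\|u\|_{L^2}+\|v\|_{L^2}\bigr)^{p-1}\|v\|_{L^2}^{2-p}\,\|u-v\|_{L^2_{y,x}}.
\]
Now the decay comes from $\|e^{is\Delta}h\|_{L^\infty_{y,x}}^{p-1}\lesssim s^{-\frac d2(p-1)}$ (the cylinder dispersive estimate $\|e^{is\Delta}\|_{L^1_yH^1_x\to L^\infty_yH^1_x}\lesssim s^{-d/2}$ plus Sobolev on $\mathbb{T}$, with $h$ taken nice by density), the middle factors are bounded by mass conservation, and the last factor is $o(1)$ \emph{qualitatively}. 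Hence $R(s)=o\bigl(s^{-\frac d2(p-1)}\bigr)$ immediately, so the integrand is $\ge\tfrac{c_0}{2}s^{-\frac d2(p-1)}$ for large $s$ and the contradiction follows for $p\le1+\tfrac2d$. No rate on the difference is ever needed.

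Your two proposed workarounds for the self-created obstacle are both problematic. Route~(a) needs a priori space--time control on $u$ to bound the Duhamel tail, but in the long-range regime no global Strichartz bound on $u$ is available---that is precisely what fails---so the argument is circular. Route~(b) requires $|y|u_0\in L^2$, which the theorem does not assume; moreover, for defocusing mass-subcritical NLS the pseudoconformal quantity is monotone \emph{increasing}, so the dichotomy you state does not directly yield the decay $\|u(t)\|_{L^{p+1}}\lesssim t^{-2/(p+1)}$, and the incompatibility of $\|J(t)u\|_{L^2}\to\infty$ with $L^2$-scattering is asserted rather than proved. Replace your $L^{p+1}$ H\"older by the $L^2$--$L^\infty$ split above and the whole last paragraph becomes unnecessary.
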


Before giving the proof, we prove an auxiliary lemma.
\begin{lemma}\label{le3.2v11}
There is a positive constant $c_0$ such that for $t$ large enough, we have
\begin{align*}
t^\frac{d( p-1 ) } 2 \int_{\mathbb{R}^d  \times \mathbb{T}}  \left| \left( e^{ it \Delta_{\mathbb{R}^d  \times \mathbb{T}}} h \right) (y, x ) \right|^{p+1} \, \mathrm{d}y \mathrm{d} x  \ge c_0.
\end{align*}

\end{lemma}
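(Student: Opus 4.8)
The plan is to exploit the dispersive decay of the linear propagator on $\mathbb{R}^d\times\mathbb{T}$, but in a way that is \emph{sharp at the critical rate} $t^{-d/2}$, so that the lower bound is not lost. The key structural fact is that $e^{it\Delta_{\mathbb{R}^d\times\mathbb{T}}}$ factors as $e^{it\Delta_y}\otimes e^{it\Delta_x}$, and on the torus $e^{it\Delta_x}$ is a unitary group that is, in particular, \emph{periodic in $t$} up to the quantum revival phenomenon; at the very least it preserves the $L^2_x$ norm. So I would first reduce to controlling a quantity that only sees the $\mathbb{R}^d$-variable. Concretely, write $h=\sum_{k\in\mathbb{Z}}h_k(y)e^{ikx}$ in its Fourier series in $x$, so that $\left(e^{it\Delta_{\mathbb{R}^d\times\mathbb{T}}}h\right)(y,x)=\sum_k e^{-itk^2}\left(e^{it\Delta_y}h_k\right)(y)\,e^{ikx}$. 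The natural first step is to choose a \emph{good test function} $h$: I would not prove the bound for all $h\in L^2$ (it is false for $h=0$), but rather produce \emph{one} specific $h$ — this is all the non-scattering argument in Theorem 2.2 needs — for which the displayed inequality holds. A convenient choice is $h(y,x)=g(y)$, i.e. $x$-independent, so that only the $k=0$ mode survives and the problem collapses exactly to the Euclidean $\mathbb{R}^d$ statement:
\begin{align*}
t^{\frac{d(p-1)}{2}}\,\|e^{it\Delta_{\mathbb{R}^d\times\mathbb{T}}}h\|_{L^{p+1}_{y,x}}^{p+1}
=2\pi\, t^{\frac{d(p-1)}{2}}\,\|e^{it\Delta_y}g\|_{L^{p+1}_y(\mathbb{R}^d)}^{p+1}.
\end{align*}

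Next I would invoke the classical asymptotic profile of the free Schrödinger evolution on $\mathbb{R}^d$: for $g\in L^1\cap L^2$ (say Schwartz, with $\widehat g(0)\neq 0$), one has the stationary-phase expansion
$\left(e^{it\Delta_y}g\right)(y)=(4\pi it)^{-d/2}e^{i|y|^2/(4t)}\,\widehat g\!\left(\tfrac{y}{2t}\right)+o(t^{-d/2})$ in a suitable sense. Pointwise after the change of variables $y=2t\eta$, $\mathrm{d}y=(2t)^d\,\mathrm{d}\eta$, this gives
\begin{align*}
\|e^{it\Delta_y}g\|_{L^{p+1}_y}^{p+1}
=(4\pi t)^{-\frac{d(p+1)}{2}}(2t)^d\int_{\mathbb{R}^d}|\widehat g(\eta)|^{p+1}\,\mathrm{d}\eta+o\!\left(t^{d-\frac{d(p+1)}{2}}\right),
\end{align*}
and since $d-\tfrac{d(p+1)}{2}=-\tfrac{d(p-1)}{2}$, multiplying by $t^{d(p-1)/2}$ yields a strictly positive constant $c_0 = 2\pi(4\pi)^{-d(p+1)/2}2^d\|\widehat g\|_{L^{p+1}}^{p+1}>0$ in the limit $t\to\infty$, hence a lower bound $\geq c_0$ (after shrinking $c_0$) for $t$ large. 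To make the error term rigorous rather than heuristic I would either cite the standard asymptotic-completeness-style lemma (e.g. from Cazenave's book, the $L^{p+1}$ decay/profile of $e^{it\Delta}$), or argue directly: $e^{it\Delta_y}g = (4\pi it)^{-d/2}\mathcal{M}_t\mathcal{D}_t\mathcal{F}\mathcal{M}_t g$ where $\mathcal{M}_t$ is multiplication by $e^{i|y|^2/(4t)}$ and $\mathcal{D}_t$ is the $L^2$-unitary dilation, and $\mathcal{M}_t g\to g$ in $L^2$ (and in $L^{q}$ for $q$ in a range, by dominated convergence if $g$ is nice), so $\mathcal{F}\mathcal{M}_t g\to\widehat g$ in the appropriate norm; the modulus $|e^{it\Delta_y}g|$ then converges after rescaling to $(4\pi t)^{-d/2}(2t)^{d/\cdot}$-normalized $|\widehat g|$, and one reads off the $L^{p+1}$ asymptotics.

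The main obstacle — and the only place requiring care — is controlling the $o(t^{-d/2})$ error \emph{in $L^{p+1}_y$}, not merely pointwise: the rescaled functions $t^{d/2}\bigl|e^{it\Delta_y}g\bigr|(2t\,\cdot)$ converge to $|\widehat g|$ in $L^{p+1}(\mathbb{R}^d)$, and one must justify passing the limit inside the $L^{p+1}$ integral. For $1<p<1+\tfrac4d$ the exponent $p+1$ lies in $(2,2+\tfrac4d)$, which is exactly the range where the free Schrödinger flow maps $L^2\cap L^{(p+1)'}$ continuously with the decay rate $t^{-d\left(\frac12-\frac1{p+1}\right)}$, so the needed convergence follows from the $L^{(p+1)'}\to L^{p+1}$ dispersive estimate together with density of Schwartz functions and the explicit profile; alternatively one uses Fatou for the lower bound, which suffices since we only need $\geq c_0$. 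I would choose $g$ once and for all (Schwartz, $\widehat g\geq 0$, $\widehat g(0)>0$), take $h(y,x)=g(y)$, and conclude. The resulting $c_0$ and the threshold $t\geq T_0$ depend only on $g$, which is fine for the application in Theorem 2.2.
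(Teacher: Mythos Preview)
Your argument establishes the lower bound only for a \emph{chosen} $x$-independent $h$, but that is not what the application to Theorem~\ref{th3.1v11} requires. In the contradiction argument there, one assumes $\|u(t)-e^{it\Delta_{\mathbb{R}^d\times\mathbb{T}}}h\|_{L^2}\to 0$ for \emph{some} $h\in L^2_{y,x}$; this $h$ is the putative scattering state of the given solution $u$ and is handed to you, not selected by you. The lemma must then be invoked for precisely that $h$, so the sentence ``produce one specific $h$ --- this is all the non-scattering argument in Theorem~2.2 needs'' is where the proposal breaks. If the scattering state happened to have vanishing zero Fourier mode in $x$, your reduction to $\mathbb{R}^d$ would yield nothing.

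The paper's proof takes a different, more robust route that works for every nonzero $h\in L^2_{y,x}$ (with $c_0=c_0(h)$): apply H\"older on the set $\{|y|\le Kt\}\times\mathbb{T}$ to get
\[
\Big(\int_{|y|\le Kt}\int_{\mathbb{T}}\big|e^{it\Delta_{\mathbb{R}^d\times\mathbb{T}}}h\big|^{2}\,\mathrm{d}y\,\mathrm{d}x\Big)^{\frac{p+1}{2}}
\lesssim (Kt)^{\frac{d(p-1)}{2}}\big\|e^{it\Delta_{\mathbb{R}^d\times\mathbb{T}}}h\big\|_{L^{p+1}_{y,x}}^{p+1},
\]
using only $L^{p+1}_x\hookrightarrow L^2_x$ on the compact torus. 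Since $e^{it\Delta_x}$ is unitary on $L^2_x$, the left side reduces to the purely Euclidean local-mass asymptotic $\int_{|y|\le Kt}\|e^{it\Delta_y}h(y,\cdot)\|_{L^2_x}^2\,\mathrm{d}y\to (2\pi)^{-d}\int_{|\xi|\le K/2}\|\mathcal{F}_y h(\xi,\cdot)\|_{L^2_x}^2\,\mathrm{d}\xi$, an $L^2$ statement requiring no special structure on $h$ and no $L^{p+1}$ profile. Your MDFM/stationary-phase computation for the Euclidean $L^{p+1}$ asymptotic is correct as far as it goes, but it is the harder tool; the paper sidesteps it entirely by trading the $L^{p+1}$ lower bound for an $L^2$ one on a ball of radius $\sim t$.
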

\begin{proof}
By H\"older's inequality, we have
\begin{align*}
& \left( \int_{| y | \le Kt} \int_{\mathbb{T}}  \left| \left( e^{ it\Delta_{\mathbb{R}^d  \times \mathbb{T}}}  h\right) (y, x ) \right|^2 \, \mathrm{d}y  \mathrm{d} x  \right)^\frac{p+1}2 \\
 & \le (Kt)^\frac{d( p-1) } 2  \left\|e^{ it \Delta_{\mathbb{R}^d  \times \mathbb{T}}} h \right\|_{L_y^{p+1} L_x^2( \mathbb{R}^d  \times \mathbb{T})}^{p+1}
\\
& \lesssim  (Kt)^\frac{d( p-1) } 2  \left\|e^{ it \Delta_{\mathbb{R}^d  \times \mathbb{T}}} h \right\|_{L_{y, x } ^{p+1}  ( \mathbb{R}^d  \times \mathbb{T})}^{p+1}  .
\end{align*}
On the other hand, we have
\begin{align*}
& \int_{| y | \le Kt} \int_{\mathbb{T}}  \left| \left( e^{ it \Delta_{\mathbb{R}^d  \times \mathbb{T}}} h  \right) (y, x ) \right|^2 \,\mathrm{d} y  \mathrm{d}x  \\
= & \int_{| y | \le Kt } \int_{\mathbb{T}} \left| \frac1{ (4 \pi i t)^\frac{d}2} \int_{\mathbb{R}^d } e^{  \frac{i \left|y  - \tilde{y} \right|^2 }{4t }} h \left(\tilde{y }, x \right) \, \mathrm{d} \tilde{y} \right|^2 \, \mathrm{d}y  \mathrm{d} x \\
= & \int_{|  y | \le Kt} \int_{\mathbb{T}} \frac1{ (4\pi t)^d } \int\int_{\mathbb{R}^{2d } } e^{ \frac{i \left(\tilde{y}^2 - \tilde{\tilde{y}}^2 \right) + 2 i y \left( \tilde{y} - \tilde{\tilde{y}} \right)}{4t}} \overline{h \left(\tilde{\tilde{y}}, x  \right) } h \left(\tilde{y}, x \right) \,\mathrm{d}\tilde{y} \mathrm{d} \tilde{\tilde{y}} \mathrm{d}x \mathrm{d}y  \\
= & \frac1{ ( 2\pi)^d  } \int_{|\xi | \le \frac{K} 2 } \int_{\mathbb{T}} \int\int_{\mathbb{R}^{2d } } e^{ \frac{i \left(\tilde{y}^2 - \tilde{\tilde{y}}^2 \right)} { 4t} + i \left(\tilde{y} - \tilde{\tilde{y}} \right) \xi} h \left(\tilde{y}, x \right) \overline{h \left(\tilde{\tilde{y}}, x \right)} \,\mathrm{d} \tilde{y} \mathrm{d} \tilde{\tilde{y}} \mathrm{d}x  \mathrm{d} \xi\\
\stackrel{t\to \infty}\to &
 \frac1{ ( 2\pi)^d } \int_{|\xi| \le \frac{K}2} \int_{\mathbb{T}} \left( \mathcal{F}_y  {h}  \right) (\xi,x ) \overline{  \left( \mathcal{F}_y {h}  \right) (\xi, x )}  \,\mathrm{d} \xi \mathrm{d}x
= \frac1{ ( 2\pi)^d } \int_{ |\xi| \le \frac{K} 2} \int_{\mathbb{T}} \left|  \left( \mathcal{F}_y {h}  \right) (\xi, x ) \right|^2 \,\mathrm{d} \xi \mathrm{d}x .
\end{align*}

\end{proof}

Now, we  turn to prove Theorem \ref{th3.1v11}.
\begin{proof}[Proof of Theorem \ref{th3.1v11}.]
Suppose by contradiction that
\begin{align}\label{eq6.2v44}
\left\|u(t) - e^{ it\Delta_{\mathbb{R}^d \times \mathbb{T}}} h \right\|_{L_{y,x }^2} \to 0, \text{ as $t \to \infty$,}
 \end{align}
for some $h \in L_{y,x }^2$.

By the unitary of the operator $e^{ it \Delta_{\mathbb{R}^d \times \mathbb{T}}}$ in $L_{y, x}^2$, we see
\begin{align*}
& \frac{d}{dt} \Im \int_{\mathbb{R}^d \times \mathbb{T}} \left( e^{- it\Delta_{\mathbb{R}^d \times \mathbb{T}}} u \right)(t, y,x ) \cdot \overline{h(y, x )} \, \mathrm{d}y  \mathrm{d} x \\
 = & \Im \int_{\mathbb{R}^d \times \mathbb{T}} e^{- it\Delta_{\mathbb{R}^d \times \mathbb{T}}} \left( i |u|^{p-1} u \right)(t, y, x)  \cdot \overline{h(  y,x) } \, \mathrm{d}y  \mathrm{d}x \\
= & \Im \int_{\mathbb{R}^d \times \mathbb{T}}  \left( i |u|^{p-1} u  \right)(t,y,x) \overline{ \left( e^{ it\Delta_{\mathbb{R}^d \times \mathbb{T}}} h \right)(t,y,x ) } \,\mathrm{d}y  \mathrm{d}x .
\end{align*}
By
\begin{align*}
& \Im \int_{\mathbb{R}^d \times \mathbb{T}}  ( e^{- iT \Delta_{\mathbb{R}^d \times \mathbb{T}}} u)(T, y , x ) \cdot \overline{h(y, x) } \,\mathrm{d} y \mathrm{d} x \\
= & \Im\int_{\mathbb{R}^d \times \mathbb{T}} h(y, x)  \overline{h(y, x) } \,\mathrm{d} y \mathrm{d} x  + \Im \int_{\mathbb{R}^d \times \mathbb{T}}
\left( (e^{- iT \Delta_{\mathbb{R}^d \times \mathbb{T}}} u )(T, y, x)  - h (y, x)  \right) \overline{h(y,x) } \,\mathrm{d}y  \mathrm{d} x  \\
\to &  0, \text{ as } T\to \infty,
\end{align*}
we have
\begin{align}\label{eq2.2newv26}
\int_0^T \Im \int_{\mathbb{R}^d \times \mathbb{T}} i |u|^{p-1} u \cdot \overline{e^{ it \Delta_{\mathbb{R}^d \times \mathbb{T}}} h} \,\mathrm{d}y  \mathrm{d}x  \mathrm{d}t
\end{align}
has a limit as $T\to \infty$. On the other hand, we have
\begin{align*}
& \left| \Im \int_{\mathbb{R}^d \times \mathbb{T}} \left( i |u|^{p-1} u - i  \left|e^{ it\Delta_{\mathbb{R}^d \times \mathbb{T}}} h \right|^{p-1} e^{ it \Delta_{\mathbb{R}^d \times \mathbb{T}}} h\right) \overline{e^{ it \Delta_{\mathbb{R}^d \times \mathbb{T}}} h} \,\mathrm{d}y  \mathrm{d}x  \right|\\
& \lesssim  \left( \|u\|_{L_{y, x }^2} + \|h \|_{L_{y, x }^2} \right)^{p-1} \|h \|_{L_{y, x }^2}^{2-p}
\left\|e^{ it \Delta_{\mathbb{R}^d \times  \mathbb{T}}} h \right\|_{L_{y, x }^\infty}^{p-1}
\left\|u(t) - e^{ it \Delta_{\mathbb{R}^d \times \mathbb{T}}} h \right\|_{L_{y, x }^2},
\end{align*}
where we need the assumption $p\leq2$.
The above inequality together with \eqref{eq6.2v44}, Lemma \ref{le3.2v11}, and
\begin{align*}
\left\|e^{  it \Delta_{\mathbb{R}^d \times \mathbb{T}}} h \right\|_{L_{y, x }^\infty(\mathbb{R}\times \mathbb{T})}
\lesssim  \left\|e^{ it\Delta_{\mathbb{R}^d \times \mathbb{T}}} h \right\|_{L_y^\infty H_x^1}
 \lesssim |t|^{-\frac{d } 2} \|h \|_{L_y^1 H_x^1},
\end{align*}
 yields
\begin{align}\label{eq2.3v17}
\Im \int_{\mathbb{R}^d  \times \mathbb{T}} i |u|^{p-1} u \cdot \overline{e^{  it \Delta_{\mathbb{R}^d \times \mathbb{T}}} h } \, \mathrm{d} y  \mathrm{d} x
\ge \frac{c_0}2 t^{- \frac{d ( p-1) } 2}.
\end{align}
This implies the left side of \eqref{eq2.3v17} is not integrable for $p\leq 1+\tfrac2d$, we have a contradiction to \eqref{eq2.2newv26}.
Thus, we complete the proof of Theorem \ref{th3.1v11}.
\end{proof}

\section{Small data scattering in the short range case}
In the section, we study the long time behaviour of \eqref{eq1.2n} in the short range case for $1 + \frac2d< p < 1 + \frac4d$ when $d \ge 1$.

We consider
\begin{align}\label{eq1.1v24}
\begin{cases}
i \partial_t u + \Delta_{\mathbb{R}^d \times \mathbb{T}}  u  =   |u|^{p-1} u  , \\
u(h, y , x ) = u_0 (y, x  ),
\end{cases}
\end{align}
where $u: [h, \infty) \times \mathbb{R}^d \times \mathbb{T} \to \mathbb{C}$, $h > 0$, $y \in \mathbb{R}^d$, $x \in \mathbb{T}$.

\begin{theorem}\label{th1.1v24}

For $d \ge 1$, $1 + \frac2d < p < 1 + \frac4{d}$, if $\|u_0 \|_{\Sigma}$ is sufficiently small, then the solution to \eqref{eq1.1v24} globally exists. 
Moreover, for any $\gamma < \frac{d}2  \left( 1- \frac1p \right)$, we have the decay estimate
\begin{align}\label{eq1.3v24}
\|u(t, y, x ) \|_{L_y^{2p}H_x^1 (\mathbb{R}^d \times \mathbb{T} ) } \le C t^{- \gamma},
\end{align}
and as a consequence, there exists $u_+ \in H^1$ such that
\begin{align*}
\lim\limits_{t \to \infty}  \left\|u(t) - e^{i t\Delta_{\mathbb{R}^d \times \mathbb{T}}} u_+  \right\|_{H^1} = 0.
\end{align*}

\end{theorem}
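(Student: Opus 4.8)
\textbf{Proof proposal for Theorem \ref{th1.1v24}.}

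The plan is to run a vector-field / commutator bootstrap in the spirit of McKean--Shatah \cite{MS}, working on the half-line $[h,\infty)$ with $h>0$ fixed and using the operator $|J(t)|^s$ sketched in the introduction. First I would set up the nonlinear flow: by the local theory on $\mathbb{R}^d\times\mathbb{T}$ (which mimics the Euclidean energy-subcritical theory since $p<1+\frac4{d-1}$) the Cauchy problem \eqref{eq1.1v24} is locally well posed in $H^1_{y,x}$, and conservation of $\mathcal{M}$ and $\mathcal{E}$ gives an a priori bound $\|u(t)\|_{H^1_{y,x}}\lesssim \|u_0\|_{H^1}\le\|u_0\|_\Sigma$ on the maximal interval of existence. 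So global existence will follow once the relevant weighted norm stays finite, which is exactly what the bootstrap delivers.

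Second, I would define $\mathcal{X}$ to be the iteration space carrying the norm
\begin{align*}
\|u\|_{\mathcal{X}} := \sup_{t\ge h}\Big( \|u(t)\|_{L^2_yH^1_x} + \big\| |J(t)|^s u(t)\big\|_{L^2_yH^1_x}\Big),\qquad s=\tfrac d2\Big(1-\tfrac1p\Big),
\end{align*}
and establish two families of estimates. The \emph{dispersive/interpolation estimate} is \eqref{eq1.6v24}: I would prove, via the factorization $|J(t)|^s\sim t^s(-\Delta_y)^{s/2}e^{-it\Delta_x}\cdot e^{it\Delta_x}(\cdots)$ together with the $L^p_y$ bound for the resolvent $\langle (-\Delta_y)^{1/2}\rangle^{-s}$ (the Hardy--Littlewood--Sobolev / Bernstein input hinted at at the end of the introduction), that
\begin{align*}
\|u(t)\|_{L^{2p}_yH^1_x}\lesssim t^{-\frac d2(1-\frac1p)}\big\||J(t)|^s u(t)\big\|_{L^2_yH^1_x},
\end{align*}
uniformly in the torus variable; note $0<s<2$ precisely because $p<1+\frac4d$, so all the weights that appear are controlled by the three pieces of $\|\cdot\|_\Sigma$. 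The \emph{energy estimate} is obtained from the equation
\begin{align*}
(i\partial_t+\Delta_y)|J(t)|^s u(t) = |J(t)|^s e^{-it\Delta_x}\big(|u(t)|^{p-1}u(t)\big),
\end{align*}
by pairing with $|J(t)|^s u$ in $L^2_yH^1_x$: the left side produces $\frac{d}{dt}\||J(t)|^s u\|_{L^2_yH^1_x}^2$, and the right side is estimated by commuting $|J(t)|^s$ past the nonlinearity (a Leibniz/fractional chain-rule step, using $0<s<2$ and $p>1$ so the nonlinearity is smooth enough) and then by Hölder in $y$ using \eqref{eq1.6v24}: the gain is $t^{-\frac d2(p-1)(1-\frac1p)}\cdot$(polynomial), and $\frac d2(p-1)\frac{p-1}{p}>1$ exactly when $p>1+\frac2d$, so the time integral converges. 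Combining the two gives a closed inequality $\|u\|_{\mathcal X}\le C\|u_0\|_\Sigma + C\|u\|_{\mathcal X}^p$, whence $\|u\|_{\mathcal X}\lesssim\|u_0\|_\Sigma$ for small data, and global existence.

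Third, the decay \eqref{eq1.3v24}: with $\||J(t)|^s u\|_{L^\infty_tL^2_yH^1_x}\lesssim\|u_0\|_\Sigma$ in hand, \eqref{eq1.6v24} immediately yields $\|u(t)\|_{L^{2p}_yH^1_x}\lesssim t^{-s}$, and for $\gamma<s$ one interpolates between this and the uniform $L^2_yH^1_x$ bound to absorb any endpoint loss. Finally, scattering: writing $v(t)=e^{-it\Delta_{\mathbb{R}^d\times\mathbb{T}}}u(t)$, Duhamel gives $v(T_2)-v(T_1)=-i\int_{T_1}^{T_2}e^{-it\Delta}(|u|^{p-1}u)\,dt$, and $\|e^{-it\Delta}(|u|^{p-1}u)\|_{H^1}=\||u|^{p-1}u\|_{H^1}\lesssim\|u\|_{L^\infty_yH^1_x}^{p-1}\|u\|_{L^2_yH^1_x}\lesssim t^{-\frac d2(p-1)}$ (using the $L^\infty_y H^1_x$ decay \eqref{eq1.4v24}, which follows from \eqref{eq1.3v24} by Sobolev embedding $L^{2p}_y\hookrightarrow L^\infty_y$ up to $\varepsilon$-loss for $2p>d$, or directly from the analogue of \eqref{eq1.6v24} with a higher Lebesgue exponent); since $\frac d2(p-1)>1$ the integral converges, $\{v(t)\}$ is Cauchy in $H^1$, and $u_+:=\lim v(t)$ works.

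The main obstacle I expect is the second estimate — controlling $|J(t)|^s$ acting on the nonlinearity $|u|^{p-1}u$ when $1<s<2$. One must commute a \emph{fractional, time-dependent} operator (morally $t^s(-\Delta_y)^{s/2}$ conjugated by $e^{it\Delta_x}$) past a nonsmooth nonlinearity of subquadratic growth; the right framework is the commutator identity for $J(t)=y+2it\nabla_y$ (so that $J(t)e^{-it\Delta_y}=e^{-it\Delta_y}y$ and hence $J(t)$ differentiates the nonlinearity like a vector field, up to the $e^{-it\Delta_x}$ twist which only hits the torus variable), combined with a fractional product rule valid for non-integer $s<2$; ensuring no extra powers of $t$ are lost in this step, and that the weights stay within $\|\cdot\|_\Sigma$ (this is why the definition of $\Sigma$ carries the exact weights $\langle y\rangle^2 f$ and $\langle y\rangle\nabla f$, matching $s<2$), is the delicate point. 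Handling the $x$-derivatives is comparatively harmless: $\partial_x$ commutes with both $\Delta_{\mathbb{R}^d\times\mathbb{T}}$ and $J(t)$, so the $H^1_x$ regularity is transported for free and the torus plays no role in the dispersion, exactly as in the heuristic that scattering is governed by the $\mathbb{R}^d$ factor.
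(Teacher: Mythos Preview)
Your overall framework (McKean--Shatah vector field $|J(t)|^s$, dispersive estimate of type \eqref{eq1.6v24}, bootstrap, then scattering) matches the paper, but the ``energy estimate'' step has a genuine gap. Pairing in $L^2_yH^1_x$ forces you to control $\||J|^s(|u|^{p-1}u)\|_{L^2_yH^1_x}$; after the Leibniz step you must place the $p-1$ extra factors of $u$ via H\"older against the $L^2_y$ norm of $|J|^s u$, which requires $\|u\|_{L^\infty_yH^1_x}$ --- and $s=\frac d2(1-\frac1p)<\frac d2$ is never enough to reach $L^\infty_y$. If instead you put the extra factors in $L^{2p}_y$ as you suggest, the remaining slot for $|J|^s u$ is $L^{2p}_y$, not $L^2_y$, so the bootstrap does not close on your $\mathcal X$-norm. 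More concretely, your integrability claim is wrong: $\frac d2(p-1)\frac{p-1}{p}>1$ is \emph{not} equivalent to $p>1+\frac2d$ (at $p=1+\frac2d$ the left side equals $\frac{2}{d+2}<1$; for $d=1$ integrability fails on the whole interval $3<p<2+\sqrt3$). The same issue reappears in your scattering step, where you invoke the $L^\infty_yH^1_x$ decay \eqref{eq1.4v24} that was only stated heuristically and does not follow from \eqref{eq1.3v24} when $2p\le d$.

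The paper avoids this by replacing the energy pairing with a Strichartz estimate: one bounds $\||J|^s u\|_{L^\infty_tL^2_yH^1_x}$ by $\||J(h)|^s u_0\|_{L^2_yH^1_x}+\||J|^s(|u|^{p-1}u)\|_{L^{q_1'}_tL^{r_1'}_yH^1_x}$ for an admissible pair with $q_1>\frac{2}{d(p-1)-2}$. The point is that $r_1'<2$, so after the Leibniz/commutator step the $|J|^s u$ factor sits in $L^2_y$ and the remaining $|u|^{p-1}$ lives in a \emph{finite} Lebesgue space handled by the $L^{2p}_y$ decay; the resulting time exponent is integrable precisely because of the choice of $q_1$ (this is where $p>1+\frac2d$ really enters). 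Two further adjustments are needed to close: take $s=\big(\frac d2(1-\frac1p)\big)^+$ (strictly above the endpoint, so the resolvent representation of $(-\Delta_y)^{-s/2}$ converges; this is why the decay exponent in \eqref{eq1.3v24} is any $\gamma<\frac d2(1-\frac1p)$ rather than equality), and exploit that the Strichartz constant $C(h)$ coming from $\int_h^\infty t^{-(\cdots)}\,\mathrm{d}t$ tends to $0$ as $h\to\infty$ --- this, not just small data, is what makes the continuity argument go through and explains why the Cauchy problem is posed at time $h>0$. Scattering is then proved the same way, by Strichartz on the Duhamel term rather than by an $L^\infty_y$ decay bound.
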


First, we recall the basic resolvent estimate, which is extended to the Schr\"odinger operator with inverse square potentials, see \cite{MSZ,MZZ}.

\begin{lemma}[Resolvent estimate] \label{le2.2v24}
The following weighted resolvent estimate holds for $\lambda > 0$:
\begin{align}
 \left\|  \left( \lambda - \Delta_{\mathbb{R}^d}  \right)^{- 1} f \right\|_{L^r(\R^d)} \le C \lambda^{\frac12  \left( d- 2 - d \left( \frac1r + 1 - \frac1q  \right) \right)} \|f\|_{L^q (\R^d)}, \label{eq2.3v24}
\end{align}
where $1 \le q  \le r  \le \infty$.

\end{lemma}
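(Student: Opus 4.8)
The plan is to reduce the estimate to a convolution bound for the explicit resolvent kernel, combined with a scaling argument. Since $\lambda>0$, the operator $\lambda-\Delta_{\mathbb{R}^d}$ is coercive and $(\lambda-\Delta_{\mathbb{R}^d})^{-1}f = G_\lambda * f$, where $G_\lambda$ is the Bessel (Yukawa) potential, i.e. the inverse Fourier transform of $(\lambda+|\xi|^2)^{-1}$. A change of variables $\xi\mapsto\sqrt\lambda\,\xi$ in the defining Fourier integral yields the scaling identity $G_\lambda(x)=\lambda^{\frac{d-2}{2}}G_1(\sqrt\lambda\,x)$, so the whole problem is reduced to understanding the single kernel $G_1$.

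Next I would record the elementary properties of $G_1$: it is positive, radial, smooth away from the origin, decays exponentially (like $e^{-|x|/2}$) as $|x|\to\infty$, and near the origin behaves like $|x|^{2-d}$ when $d\ge 3$, like $\log\frac1{|x|}$ when $d=2$, and is bounded when $d=1$ (indeed $G_\lambda(x)=\tfrac1{2\sqrt\lambda}e^{-\sqrt\lambda|x|}$ there). Hence $G_1\in L^s(\mathbb{R}^d)$ exactly when $\frac1s>1-\frac2d$ (no restriction when $d=1$), with $\|G_1\|_{L^s(\mathbb{R}^d)}<\infty$ in that range. Then Young's convolution inequality with $\frac1s=1+\frac1r-\frac1q$ — for which $s\ge1$ holds precisely because $q\le r$ — gives $\|G_\lambda*f\|_{L^r(\mathbb{R}^d)}\le\|G_\lambda\|_{L^s(\mathbb{R}^d)}\|f\|_{L^q(\mathbb{R}^d)}$. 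The scaling identity gives $\|G_\lambda\|_{L^s(\mathbb{R}^d)}=\lambda^{\frac{d-2}{2}-\frac{d}{2s}}\|G_1\|_{L^s(\mathbb{R}^d)}$, and substituting $\frac1s=1+\frac1r-\frac1q$ turns the power of $\lambda$ into $\frac12\big(d-2-d(\frac1r+1-\frac1q)\big)$, which is exactly the claimed exponent. This settles the case $\frac1q-\frac1r<\frac2d$.

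The one delicate point — and the only real obstacle — is the Hardy–Littlewood–Sobolev borderline $\frac1q-\frac1r=\frac2d$ (relevant for $d\ge2$), where $G_1\notin L^{d/(d-2)}$ and Young's inequality is unavailable. Here I would instead use the uniform pointwise bound $G_\lambda(x)\le C|x|^{2-d}$, which follows from the scaling identity together with $G_1(x)\lesssim|x|^{2-d}$ for all $x$, and invoke the Hardy–Littlewood–Sobolev inequality for the Riesz potential $|x|^{2-d}$ (alternatively, the weak Young inequality against $G_1\in L^{d/(d-2),\infty}$). Consistency check: on this line the claimed exponent equals $0$, matching the $\lambda$-independent bound that $|x|^{2-d}$ produces; the strong endpoints $q=1$ or $r=\infty$ on this line are necessarily excluded. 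This is the same estimate established in \cite{MSZ,MZZ} in the more general setting of a Schr\"odinger operator with inverse-square potential, where the convolution-kernel argument is replaced by heat-kernel/spectral-multiplier bounds; for the pure Laplacian the elementary kernel computation above suffices.
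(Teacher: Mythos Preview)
Your argument is correct and complete for the range it treats. The paper itself does not prove this lemma at all: it simply records the estimate and cites \cite{MSZ,MZZ}, where the analogous bound is established (by heat-kernel/spectral methods) for Schr\"odinger operators with inverse-square potentials. Your route---writing $(\lambda-\Delta)^{-1}f=G_\lambda*f$, reducing by the scaling $G_\lambda(x)=\lambda^{(d-2)/2}G_1(\sqrt\lambda\,x)$, and then applying Young's inequality (or HLS/weak Young on the line $\tfrac1q-\tfrac1r=\tfrac2d$)---is the standard elementary proof for the pure Laplacian and has the advantage of being entirely self-contained. One remark worth making explicit: your argument covers exactly $\tfrac1q-\tfrac1r\le\tfrac2d$ (with the usual endpoint exclusions at equality), whereas the lemma as printed asserts the bound for all $1\le q\le r\le\infty$. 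In fact the estimate is false for $\tfrac1q-\tfrac1r>\tfrac2d$ when $d\ge3$, since $(1-\Delta)^{-1}:L^q\to L^r$ is equivalent to the Sobolev embedding $W^{2,q}\hookrightarrow L^r$; so the stated range in the paper is slightly too generous. This does not affect the rest of the paper: every application of the lemma in the proof of Lemma~\ref{le3.1v24} lies strictly inside the region you handle.
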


Define the commutator operator: for any $s \in (0, 2) $,
\begin{align*}
|J(t)|^s u(t,y, x)  = M(t)  \left( - t^2 \Delta_{\mathbb{R}^d}  \right)^\frac{s}2 M(- t) e^{-it \Delta_{\mathbb{T}} } u(t,y, x) ,
\end{align*}
where $M(t) = e^{ \frac{i |y |^2}{4t}}$. Moreover, we can see that $|J(t) |^s u(t,y, x)  $ satisfies
\begin{align*}
\left( i \partial_t  + \Delta_{\mathbb{R}^d}  \right)  |J(t) |^s u(t, y, x )   = |J(t) |^s e^{-it \Delta_{\mathbb{T}}  }   \left( |u(t, y, x ) |^{p-1} u(t , y, x )  \right).
\end{align*}
Let $s =  \left( \frac{d}2  \left( 1- \frac1p \right) \right)^+$. By Strichartz estimate, we have
\begin{align}\label{eq3.2v24}
\left\| |J(t) |^s u  \right\|_{L_t^\infty L_y^2 H_x^1 } \le C  \left\| |J(h)|^s u(h) \right\|_{L^2_y H_x^1 } + C  \left\| |J(t) |^s \left( |u|^{p-1} u  \right)  \right\|_{L_t^{q_1'}  L_y^{r_1'}  H_x^1},
\end{align}
where $ \left(q_1 , r_1  \right)$ is a admissible pair, with
\begin{align}
q_1 \in \left(  \frac2{d(p-1) - 2}, \infty \right) .
\label{eq3.4v24}
\end{align}
 In order to apply continuity method, we have to bound $ \left\| |J(t) |^s  \left( |u|^{p-1} u  \right) \right\|_{L_t^{q_1'}  L_y^{r_1'}  H_x^1 }$ by $ \left\| |J(t) |^s u \right\|_{L_t^\infty L_y^2 H_x^1 }$.
 We will first 
  present some properties of the commutator operator, especially ``Sobolev embedding theorem".

\begin{lemma}\label{le3.1v24}

For $u \in H_y^s$, $s =  \left( \frac{d}2  \left( 1- \frac1p \right) \right)^+$, there exists some $0 < \eta < 1$ such that
\begin{align}\label{eq3.7newv29}
\|u \|_{L_y^{2p}} \le C  \left\|  ( - \Delta_{\mathbb{R}^d}  )^\frac{s}2 u  \right\|_{L_y^2} + C  \left\| ( - \Delta_{\mathbb{R}^d} )^\frac{s}2 u  \right\|_{L_y^2}^{1- \eta}  \|u \|_{L_y^2}^{ \eta}.
\end{align}

\end{lemma}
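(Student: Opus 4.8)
The plan is to read \eqref{eq3.7newv29} as a Gagliardo--Nirenberg inequality built on the observation that $s$ sits just above the critical Sobolev exponent for the target space $L^{2p}_y$. Set $s_0 := \frac{d}{2}\left(1 - \frac1p\right)$, so that $\frac{1}{2p} = \frac12 - \frac{s_0}{d}$ and, since $1 < p < 1 + \frac4d$ in every dimension $d \ge 1$, one has $0 < s_0 < \min\{2, \tfrac{d}{2}\}$; by the definition $s = s_0^+$ we may moreover arrange $s_0 < s < 2$. Two standard facts then suffice. First, the homogeneous Sobolev embedding $\dot H^{s_0}(\mathbb{R}^d) \hookrightarrow L^{2p}(\mathbb{R}^d)$, valid precisely because $0 < s_0 < \frac{d}{2}$, which gives $\|u\|_{L^{2p}_y} \le C \big\|(-\Delta_{\mathbb{R}^d})^{s_0/2} u\big\|_{L^2_y}$. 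Second, the interpolation inequality $\big\|(-\Delta_{\mathbb{R}^d})^{s_0/2}u\big\|_{L^2_y} \le \big\|(-\Delta_{\mathbb{R}^d})^{s/2}u\big\|_{L^2_y}^{s_0/s}\,\|u\|_{L^2_y}^{1 - s_0/s}$, which is nothing but Plancherel followed by H\"older in $\xi$ applied to $|\xi|^{2 s_0} = \big(|\xi|^{2s}\big)^{s_0/s}$.

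Composing these two estimates yields the multiplicative bound $\|u\|_{L^{2p}_y} \le C\big\|(-\Delta_{\mathbb{R}^d})^{s/2}u\big\|_{L^2_y}^{1 - \eta}\,\|u\|_{L^2_y}^{\eta}$ with the admissible choice $\eta := 1 - \frac{s_0}{s} \in (0,1)$, and since the right-hand side of \eqref{eq3.7newv29} is at least as large as this product, \eqref{eq3.7newv29} follows at once. If one prefers to avoid invoking the homogeneous space $\dot H^{s_0}$ directly, the same conclusion is reached by a Littlewood--Paley splitting $u = P_{\le 1}u + P_{>1}u$: on the high frequencies $|\xi| \ge 1$ one has $|\xi|^{s_0}\le|\xi|^{s}$, so $\|P_{>1}u\|_{L^{2p}_y}\lesssim \big\|(-\Delta_{\mathbb{R}^d})^{s_0/2}P_{>1}u\big\|_{L^2_y}\le \big\|(-\Delta_{\mathbb{R}^d})^{s/2}u\big\|_{L^2_y}$, which contributes the first term in \eqref{eq3.7newv29}, while on the low frequencies the embedding together with the interpolation bound above contributes the second term.

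There is no serious obstacle here; the only care required is bookkeeping. One must check that $s$ lies in the admissible range so that $0 < s_0 < s$ and $\dot H^{s_0}_y$ embeds in $L^{2p}_y$ --- this is exactly where the hypothesis $1 < p < 1 + \frac4d$ enters, since it forces $s_0 < 2$ in every dimension and hence permits $s = s_0^+ \in (0,2)$ --- and one must note that the assumption $u \in H^s_y$ is ample regularity for all the manipulations, because $H^s_y = L^2_y \cap \dot H^s_y$ and $\dot H^{s_0}_y$ lies between these spaces by interpolation. The constant $C$ and the exponent $\eta$ depend only on $d$ and $p$.
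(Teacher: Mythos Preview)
Your proof is correct, and in fact it is more elementary than the paper's own argument. You obtain the multiplicative Gagliardo--Nirenberg bound
\[
\|u\|_{L^{2p}_y}\le C\,\big\|(-\Delta_{\mathbb{R}^d})^{s/2}u\big\|_{L^2_y}^{\,1-\eta}\,\|u\|_{L^2_y}^{\,\eta},\qquad \eta=1-\tfrac{s_0}{s},
\]
directly from the embedding $\dot H^{s_0}_y\hookrightarrow L^{2p}_y$ together with Plancherel--H\"older interpolation between $\dot H^s_y$ and $L^2_y$; this is strictly stronger than the additive form \eqref{eq3.7newv29}, and the bookkeeping on $0<s_0<\min\{2,\tfrac d2\}$ and $s=s_0^+\in(0,2)$ is accurate.

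The paper instead proves the dual reformulation
\[
\big\|(-\Delta_y)^{-s/2}u\big\|_{L^{2p}}\le C\,\|u\|_{L^2}+C\,\|u\|_{L^2}^{\,1-\eta}\,\big\|(-\Delta_y)^{-s/2}u\big\|_{L^2}^{\,\eta}
\]
by writing $(-\Delta_y)^{-s/2}$ via the resolvent integral $c(s)^{-1}\int_0^\infty\lambda^{-s/2}(\lambda-\Delta_y)^{-1}\,\mathrm{d}\lambda$, splitting into $\int_1^\infty+\int_0^1$, and estimating each piece with the weighted resolvent $L^q\to L^r$ bounds of Lemma~\ref{le2.2v24}. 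The high-$\lambda$ piece produces the first term; the low-$\lambda$ piece is handled by a further H\"older interpolation between $L^\mu$ and $L^2$ (with $\mu$ chosen just above the Sobolev exponent~$\gamma$), yielding the second term. This route is heavier but is self-contained once the resolvent estimate is in hand, and it is robust enough to extend to Schr\"odinger operators $-\Delta+V$ (e.g.\ the inverse-square potentials referenced in~\cite{MSZ,MZZ}) for which the direct Sobolev embedding is not available. For the plain Laplacian on~$\mathbb{R}^d$, however, your argument is the natural one.
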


\begin{proof}
We only need to prove
\begin{align}\label{eq3.7v37}
 \left\| ( - \Delta_y  )^{- \frac{s}2} u  \right\|_{L^{2p}} \le C \|u \|_{L^2} + C \|u \|_{L^2}^{1- \eta}   \left\| ( - \Delta_y  )^{- \frac{s}2 } u  \right\|_{L^2}^{ \eta}.
\end{align}

 For $u \in H_y^s $, 
 we have
\begin{align}\label{eq3.8v37}
( - \Delta_y  )^{- \frac{s}2} u  = c(s)^{-1}  \int_0^\infty \lambda^{- \frac{s}2 } ( \lambda - \Delta_y )^{- 1} u \,\mathrm{d} \lambda,
\end{align}
converges strongly in $L^{2p}$, where $c(s) : =  \int_0^\infty t^{- \frac{s}2} ( 1 + \tau)^{-1} \,\mathrm{d} \tau  $.
 In fact, \eqref{eq2.3v24} and Sobolev inequality imply
\begin{align*}
\int_1^\infty  \left\|  \lambda^{- \frac{s}2 }  \left( \lambda- \Delta_y  \right)^{- 1} u  \right\|_{L^{2p}} \,\mathrm{d} \lambda \lesssim 
 \int_1^\infty \lambda^{- \frac{s}2 - 1} \|u \|_{L_y^{2p}} \,\mathrm{d}\lambda \lesssim \|u\|_{H_y^s} ,
\end{align*}
and
\begin{align*}
\int_0^1  \left\| \lambda^{- \frac{s}2}  \left( \lambda - \Delta_y  \right)^{- 1} u  \right\|_{L^{2p}} \,\mathrm{d} \lambda \lesssim  
\|u\|_{L^\alpha} \int_0^1 \lambda^{- \frac{s}2 + \frac12  \left( d-2 - d \left( \frac1{2p} + 1 - \frac1\alpha  \right) \right)} \,\mathrm{d} \lambda \lesssim  \|u \|_{H^s_y  },
\end{align*}
for $\frac1\alpha > \frac1{2p} + \frac{s}d$. Thus, \eqref{eq3.8v37} holds in $L^{2p}_y$.

By \eqref{eq2.3v24}, we obtain
\begin{align}\label{eq3.8v37}
 \left\| \int_1^\infty \lambda^{- \frac{s}2 }  ( \lambda - \Delta_y  )^{- 1} u \,\mathrm{d} \lambda  \right\|_{L^{2p}} \lesssim 
  \|u \|_{L^2} \int_1^\infty \lambda^{- \frac{s}2 + \frac12  \left( d- 2 - d \left( \frac1{2p} + \frac12  \right) \right)} \,\mathrm{d} \lambda \lesssim  \|u \|_{L^2},
\end{align}
where we have used $s =  \left( \frac{d}2  \left( 1- \frac1p  \right) \right)^+$.

By H\"older's inequality, we have
\begin{align}\label{eq3.9v37}
\left\| \int_0^1 \lambda^{- \frac{s}2} ( \lambda - \Delta_y )^{- 1} u \,\mathrm{d}\lambda  \right\|_{L^{2p}}
\le
\left\|\int_0^1 \lambda^{- \frac{s}2 } ( \lambda - \Delta_y )^{-1} u \,\mathrm{d}\lambda  \right\|_{L^\mu}^{ 1- \eta}  \left\|\int_0^1 \lambda^{- \frac{s}2} ( \lambda - \Delta_y )^{- 1} u \,\mathrm{d}\lambda  \right\|_{L^2}^\eta,
\end{align}
where 
 $\frac12 - \frac1\gamma = \frac{s}d$, $\mu > \gamma$, $\frac{1- \eta}\mu + \frac\eta2 = \frac1{2p}$. 
By \eqref{eq2.3v24}, we have 
\begin{align}\label{eq3.10v37}
\left\| \int_0^1 \lambda^{- \frac{s}2 } ( \lambda - \Delta_y )^{- 1} u \,\mathrm{d} \lambda  \right\|_{L^\mu} \lesssim 
 \|u \|_{L^2} \int_0^1 \lambda^{- \frac{s}2 + \frac12
\left( d - 2 - d \left( \frac1\mu + \frac12 \right) \right)} \,\mathrm{d} \lambda \lesssim   \|u \|_{L^2}. 
\end{align}
Again by \eqref{eq2.3v24}, we obtain
\begin{align}\label{eq3.11v37}
\left\|\int_0^1   \lambda^{- \frac{s} 2} ( \lambda - \Delta_y )^{- 1} u \,\mathrm{d} \lambda  \right\|_{L^2}
\lesssim  &
\left\| \int_0^\infty \lambda^{- \frac{s}2 } ( \lambda - \Delta_y )^{- 1} u \,\mathrm{d} \lambda  \right\|_{L^2} + \|u \|_{L^2} \int_1^\infty \lambda^{- \frac{s}2 - 1} \,\mathrm{d} \lambda \\
\lesssim  &   \left\| ( - \Delta_y  )^{- \frac{s}2} u  \right\|_{L^2} +  \|u \|_{L^2}. \notag
\end{align}
Combining \eqref{eq3.8v37}, \eqref{eq3.9v37}, \eqref{eq3.10v37}, and \eqref{eq3.11v37}, 
 we get \eqref{eq3.7v37}. 

\end{proof}

As a direct consequence of Lemma \ref{le3.1v24}, we easily obtain the following lemma.

\begin{lemma}\label{le3.2v24}

Taking $s = \left( \frac{d}2  \left( 1 - \frac1p \right) \right)^+$, 
there exists some $0 < \eta < 1$ such that for $s_0 =  \left( \frac{d}2  \left( 1- \frac1p \right) \right)^-$, we have 
\begin{align*}
\|u (t, y, x ) \|_{L_y^{2p} H_x^1 } \le C t^{- s_0}  \left(  \left\| |J(t) |^{s}  u  \right\|_{  L_y^2 H_x^1 } +   \left\| |J(t) |^{s} u  \right\|_{  L_y^2 H_x^1 }^{1- \eta}
 \|u \|_{L_y^2 H_x^1 }^{ \eta} \right).
\end{align*}

\end{lemma}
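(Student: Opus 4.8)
The plan is to deduce Lemma \ref{le3.2v24} from Lemma \ref{le3.1v24} by applying the latter on every torus slice and by unwinding the definition of the commutator operator. First I would fix $t>0$ and, for each fixed $x\in\mathbb{T}$, apply the pointwise-in-$x$ version of \eqref{eq3.7newv29} to the function $y\mapsto \langle\nabla_x\rangle u(t,y,x)$ (the $H^1_x$ weight commutes with everything acting in $y$), obtaining
\begin{align*}
\|u(t)\|_{L_y^{2p}H_x^1}\le C\bigl\|(-\Delta_{\mathbb{R}^d})^{s/2}u(t)\bigr\|_{L_y^2H_x^1}+C\bigl\|(-\Delta_{\mathbb{R}^d})^{s/2}u(t)\bigr\|_{L_y^2H_x^1}^{1-\eta}\|u(t)\|_{L_y^2H_x^1}^{\eta},
\end{align*}
where one has to be slightly careful that Lemma \ref{le3.1v24} as stated uses the exponent $s=\bigl(\tfrac d2(1-\tfrac1p)\bigr)^+$, so the homogeneous Sobolev norm appearing is exactly the one we want. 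Minkowski's inequality in the $x$ variable (valid since $2p\ge 2$) is what lets us pass from the slicewise estimate to the stated mixed norm.

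Second I would convert the homogeneous derivative $(-\Delta_{\mathbb{R}^d})^{s/2}u$ into $|J(t)|^{s}u$. Recall $|J(t)|^s u = M(t)(-t^2\Delta_{\mathbb{R}^d})^{s/2}M(-t)e^{-it\Delta_{\mathbb{T}}}u = t^s M(t)(-\Delta_{\mathbb{R}^d})^{s/2}M(-t)e^{-it\Delta_{\mathbb{T}}}u$. Since $M(\pm t)$ is a unimodular multiplication operator and $e^{-it\Delta_{\mathbb{T}}}$ is unitary on $L^2_x$ and commutes with $\langle\nabla_x\rangle$, we get $\|(-\Delta_{\mathbb{R}^d})^{s/2}u(t)\|_{L_y^2H_x^1}=t^{-s}\|\,|J(t)|^s u(t)\|_{L_y^2H_x^1}$ and likewise $\|u(t)\|_{L_y^2H_x^1}$ is unchanged under the same operations, so it equals the $L_y^2H_x^1$ norm that appears on the right of the claimed inequality. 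Substituting these identities into the slicewise bound gives a prefactor $t^{-s}$ on the first term and $t^{-s(1-\eta)}$ on the second; since $s(1-\eta)\le s$ and we only care about $t$ large (and $s_0<s$ with $s_0$ replaceable by any exponent $<\tfrac d2(1-\tfrac1p)$), both are dominated by $Ct^{-s_0}$ after possibly enlarging $C$, which yields the stated estimate.

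The main technical obstacle I anticipate is bookkeeping the difference between the three nearby exponents $s$, $s_0$, and $\tfrac d2(1-\tfrac1p)$ and making sure the $\pm$ regularizations are consistent: Lemma \ref{le3.1v24} is stated with the $+$-version of $s$, the conclusion here wants the $-$-version $s_0$ in the decay rate, and one must check that the interpolation exponent $\eta\in(0,1)$ produced by Lemma \ref{le3.1v24} does not degenerate (i.e. stays bounded away from $0$ and $1$) — this is where the hypothesis $1+\tfrac2d<p<1+\tfrac4d$ enters, guaranteeing $0<s<2$ and that the constraints $\tfrac12-\tfrac1\gamma=\tfrac sd$, $\mu>\gamma$, $\tfrac{1-\eta}\mu+\tfrac\eta2=\tfrac1{2p}$ in the proof of Lemma \ref{le3.1v24} are simultaneously solvable with $\eta$ in the open unit interval. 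Everything else is a routine transfer of a scalar inequality to a fibered one, so I would keep the write-up short: state the slicewise application, invoke Minkowski in $x$, record the unitarity identities for $M(\pm t)$ and $e^{-it\Delta_{\mathbb{T}}}$, and collect powers of $t$.
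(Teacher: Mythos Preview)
Your overall plan --- apply Lemma \ref{le3.1v24} in its vector-valued form and then convert to $|J(t)|^s$ --- is exactly the paper's intended route. But the second step contains a genuine error. The identity
\[
\bigl\|(-\Delta_{\mathbb{R}^d})^{s/2}u(t)\bigr\|_{L_y^2H_x^1}=t^{-s}\bigl\|\,|J(t)|^s u(t)\bigr\|_{L_y^2H_x^1}
\]
is false: after stripping the outer $M(t)$ (which is fine, it is unimodular) you are left with $\|(-\Delta_y)^{s/2}M(-t)e^{-it\Delta_{\mathbb{T}}}u\|_{L_y^2H_x^1}$, and you cannot remove $M(-t)$ because $(-\Delta_y)^{s/2}$ does \emph{not} commute with multiplication by $e^{-i|y|^2/(4t)}$. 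Unimodularity of $M(-t)$ only lets you drop it when it sits \emph{outside} the Fourier multiplier, not inside. (A concrete counterexample: take $u$ constant in $y$; then $(-\Delta_y)^{s/2}u=0$ but $(-\Delta_y)^{s/2}(M(-t)u)\neq 0$.)

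The fix is a one-line reordering: apply Lemma \ref{le3.1v24} not to $u$ but to $v:=M(-t)e^{-it\Delta_{\mathbb{T}}}u$. Then the left-hand side is unchanged, since $|M(-t)|=1$ and $e^{-it\Delta_{\mathbb{T}}}$ is an isometry on $H^1_x$ for each fixed $y$, so $\|v\|_{L_y^{2p}H_x^1}=\|u\|_{L_y^{2p}H_x^1}$. On the right-hand side you get exactly
\[
\bigl\|(-\Delta_y)^{s/2}v\bigr\|_{L_y^2H_x^1}=t^{-s}\bigl\|M(t)(-t^2\Delta_y)^{s/2}M(-t)e^{-it\Delta_{\mathbb{T}}}u\bigr\|_{L_y^2H_x^1}=t^{-s}\bigl\||J(t)|^s u\bigr\|_{L_y^2H_x^1},
\]
directly from the definition of $|J(t)|^s$, with no commutation needed; and $\|v\|_{L_y^2H_x^1}=\|u\|_{L_y^2H_x^1}$ for the same reason as above. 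After this correction your collection of the $t$-powers and the handling of $s$ versus $s_0$ go through as written.
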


Now we turn to the estimate of the nonlinear term.

\begin{lemma}\label{le3.4v24}
There exists $0 < \theta < 1$ such that
\begin{align}\label{eq3.5v24}
& \left\| |J(t) |^s  \left( |u(t) |^{p-1} u(t)   \right)  \right\|_{L_t^{q_1'}  L_y^{r_1'}  H_x^1 }  \\
& \le C  \left(  \left\| |J(t) |^s u (t)   \right\|_{L_t^\infty L_y^2 H_x^1 } + \| u \|_{L_t^\infty L_y^2 H_x^1 }  \right)
\left(  \left\| |J(t) |^s u  \right\|_{L_t^\infty L_y^2 H_x^1 }^\theta +   \left\| |J(t) |^s u  \right\|_{L_t^\infty L_y^2 H_x^1 }^{\theta \eta} \|u \|_{L_t^\infty L_y^2 H_x^1 }^{ ( 1- \eta) \theta}  \right)^{p-1}
\notag \\
& \quad + C(h)
\left(  \left\| |J(t) |^s u  \right\|_{L_t^\infty L_y^2 H_x^1 }^\theta +  \left\| |J(t) |^s u  \right\|_{L_t^\infty L_y^2 H_x^1 }^{\eta \theta} \|u \|_{L_t^\infty L_y^2 H_x^1 }^{( 1 - \eta ) \theta}  \right)^p. \notag
\end{align}

\end{lemma}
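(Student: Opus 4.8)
Proof proposal for Lemma~\ref{le3.4v24}.

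The plan is to first strip off the torus dynamics so that only a fractional derivative in the non‑compact variable $y$ survives, then distribute that derivative over the nonlinearity by a fractional Leibniz/chain rule, and finally close the product estimate with H\"older in $y$ and Lemmas~\ref{le3.1v24}--\ref{le3.2v24}, integrating in time over $[h,\infty)$ at the end. To begin, since $e^{-it\Delta_{\mathbb{T}}}$ is unitary on $L_x^2$ and commutes with $\langle\nabla_x\rangle$, with the modulations $M(\pm t)=e^{\pm i|y|^2/4t}$, and with $(-\Delta_{\mathbb{R}^d})^{s/2}$, while $|M(\pm t)|\equiv 1$, one has
\begin{align*}
\bigl\||J(t)|^s(|u|^{p-1}u)\bigr\|_{L_y^{r_1'} H_x^1}
= t^{s}\,\bigl\|(-\Delta_{\mathbb{R}^d})^{s/2}\bigl(M(-t)\,\langle\nabla_x\rangle(|u|^{p-1}u)\bigr)\bigr\|_{L_y^{r_1'} L_x^2},
\end{align*}
and likewise $\|\,|J(t)|^s v\|_{L_y^2 H_x^1}=t^{s}\|(-\Delta_{\mathbb{R}^d})^{s/2}(M(-t)v)\|_{L_y^2 H_x^1}$ for the norms appearing on the right of \eqref{eq3.5v24}. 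Because $\mathbb{T}$ is one dimensional, $H_x^1(\mathbb{T})\hookrightarrow L_x^\infty(\mathbb{T})$, so pointwise in $y$ we have $\|(|u|^{p-1}u)(y,\cdot)\|_{H_x^1}\lesssim\|u(y,\cdot)\|_{L_x^\infty}^{p-1}\|u(y,\cdot)\|_{H_x^1}\lesssim\|u(y,\cdot)\|_{H_x^1}^{p}$; the torus variable therefore costs nothing, and in the $y$‑fractional‑Leibniz step below I may think of $\langle\nabla_x\rangle(|u|^{p-1}u)$ schematically as the product $|u|^{p-1}\langle\nabla_x\rangle u$, using $L_x^\infty$ on the undifferentiated copies of $u$ so that a single $L_x^2$ factor closes the H\"older in $x$.

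Next I would distribute the $y$‑derivative. Since $|z|^{p-1}z$ is gauge covariant, $M(-t)\bigl(|u|^{p-1}w\bigr)=|M(-t)u|^{p-1}\bigl(M(-t)w\bigr)$, so the Christ--Weinstein fractional product/chain rule for $(-\Delta_{\mathbb{R}^d})^{s/2}$ — available because $0<s<2$ — rewrites the right‑hand side of the identity above as a finite sum of products of $p$ factors, in each of which the $y$‑derivative falls on one copy of $u$ (possibly also wearing a $\langle\nabla_x\rangle$). By the bookkeeping of the first step that distinguished factor equals, up to the unitaries $e^{\pm it\Delta_{\mathbb{T}}}$ and the modulations $M(\pm t)$, the quantity $t^{-s}\langle\nabla_x\rangle|J(t)|^{s}u$, whose $L_y^2 H_x^1$ norm is $\|\,|J(t)|^{s}u\|_{L_y^2 H_x^1}$, so that multiplied by the $t^{s}$ in front it contributes no power of $t$; when the $y$‑derivative instead falls on a copy of $u$ coming out of the $|u|^{p-1}$ block and a distinct, $y$‑undifferentiated copy carries the $\langle\nabla_x\rangle$, the latter is simply bounded by $\|u\|_{L_y^2 H_x^1}$. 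This is the origin of the first, essentially linear factor $\|\,|J(t)|^su\|_{L_t^\infty L_y^2 H_x^1}+\|u\|_{L_t^\infty L_y^2 H_x^1}$ on the right of \eqref{eq3.5v24}.

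On each resulting term I would then apply H\"older in $y$, so that the remaining copies of $u$ appear as a power $\|u\|_{L_y^{a} H_x^1}^{m}$ with exponents $a\in[2,2p)$ and $m\in\{p-1,p\}$ fixed by $r_1$ and by Schr\"odinger admissibility; interpolating $\|u\|_{L_y^{a} H_x^1}\le\|u\|_{L_y^{2p} H_x^1}^{\theta}\|u\|_{L_y^{2} H_x^1}^{1-\theta}$ for a suitable $\theta\in(0,1)$ when $a<2p$ and feeding the $L_y^{2p} H_x^1$ norm into Lemma~\ref{le3.2v24} replaces it by $t^{-s_0\theta}\bigl(\|\,|J(t)|^su\|_{L_y^2 H_x^1}+\|\,|J(t)|^su\|_{L_y^2 H_x^1}^{1-\eta}\|u\|_{L_y^2 H_x^1}^{\eta}\bigr)^{\theta}\|u\|_{L_y^2 H_x^1}^{1-\theta}$. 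Collecting these over the $m$ such copies produces, after taking suprema in $t$, exactly the structure of the second factor on the right of \eqref{eq3.5v24}; the principal group of terms (the $\langle\nabla_x\rangle u$ slot kept for free, $m=p-1$) yields the line with prefactor $\|\,|J(t)|^su\|+\|u\|$, while the lower‑order group, in which that slot too is routed through Lemma~\ref{le3.2v24} ($m=p$), yields the $C(h)(\cdots)^p$ line. It remains to take the $L_t^{q_1'}([h,\infty))$ norm of the accumulated power of $t$: choosing $q_1$ near the lower endpoint in \eqref{eq3.4v24}, which maximises $q_1'$, renders this power integrable on $[h,\infty)$, and the value of the time integral down to $t=h$ is precisely what is absorbed into the constant $C(h)$.

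The main obstacle I expect is the simultaneous, consistent choice of all these exponents: $(q_1,r_1)$ must be admissible and obey \eqref{eq3.4v24}; the H\"older indices in $y$ and $x$ must close up; the interpolation exponent $\theta$ and the auxiliary exponent $\alpha$ of Lemma~\ref{le3.1v24} (subject to $\tfrac1\alpha>\tfrac1{2p}+\tfrac sd$) must be compatible; and, most restrictively, the final power of $t$ must lie in $L_t^{q_1'}([h,\infty))$, a constraint that becomes tight as $p\downarrow 1+\tfrac2d$ and is exactly what forces $q_1$ to the lower endpoint in \eqref{eq3.4v24} and what is responsible for the $h$‑dependence in $C(h)$. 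A secondary technical point is the fractional chain rule for the non‑smooth nonlinearity $|z|^{p-1}z$: when $s<1$ (which holds for $d\le 3$) only its $C^1$ regularity is needed and the rule is elementary, whereas for larger $d$, where $s$ may lie in $(1,2)$, one must instead invoke a chain rule for H\"older‑class nonlinearities, distributing the $\lceil s\rceil$ derivatives over distinct copies of $u$; it is precisely the strict inequalities $p>1+\tfrac2d$ (hence $d(p-1)-2>0$) and $s<2$ that leave the room to carry this through.
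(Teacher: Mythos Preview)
Your proposal follows essentially the same route as the paper: strip off $e^{-it\Delta_{\mathbb T}}$ and the modulation so that only $t^{s}(-\Delta_y)^{s/2}$ acts on $|M(-t)u|^{p-1}M(-t)u$, distribute the fractional $y$–derivative by a product/chain rule, place the differentiated copy in $L_y^2$ to reconstitute $\||J(t)|^s u\|_{L_y^2 H_x^1}$, estimate the remaining copies in $L_y^{r_1'(p-1)}$ (respectively $L_y^{\tilde r_1'(p-1)}$) via interpolation toward $L_y^{2p}$ and Lemma~\ref{le3.2v24}, and then integrate the resulting power of $t$ in $L_t^{q_1'}([h,\infty))$ using the range \eqref{eq3.4v24}. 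The only cosmetic differences are that the paper applies the fractional chain rule for $F(z)=|z|^{p-1}z$ in one stroke (your two–step Leibniz/chain decomposition is equivalent), and that the $C(h)(\cdots)^p$ contribution in the paper arises from an initial splitting off of the undifferentiated piece $\||u|^{p-1}u\|_{L_t^{q_1'}L_y^{r_1'}H_x^1}$ rather than from the mechanism you sketch---your description of that term's origin is slightly imprecise, but this does not affect the validity of the argument.
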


\begin{proof}

Let $\tilde{r}_1 = \frac{2r_1}{r_1 + 2}$, 
 we have
\begin{align*}
& \quad \left\| |J(t) |^s  \left( |u|^{p-1} u  \right)  \right\|_{L_t^{q_1'}  L_y^{r_1'}  H_x^1 } \\
& \le
 \left\| ( - \Delta )^\frac{s}2 M(- t)  \left( |u|^{p-1} u \right)  \right\|_{L_t^{q_1'}  L_y^{r_1'} H_x^1 } +  \left\| |u|^{p-1} u  \right\|_{L_t^{q_1'}  L_y^{r_1'} H_x^1  }
\\
& \le  \left\| ( - \Delta)^\frac{s}2  (M(-t ) u )  \right\|_{L_t^\infty L_y^2 H_x^1 }  \left\| |u|^{p-1}  \right\|_{L_t^{q_1'}  L_y^{\tilde{r}_1' } H_x^1 }
+  \left\| \| u \|_{L_y^{ r_1' p} H_x^1 }^p  \right\|_{L_t^{q_1'} } \\
& \le C  \left(  \left\| |J(t) |^s u  \right\|_{L_t^\infty L_y^2 H_x^1 } + \|u \|_{L_t^\infty L_y^2 H_x^1  }  \right)
\left\| |u|^{p-1}  \right\|_{L_t^{q_1'}  L_y^{\tilde{r}_1' } H_x^1 } +  \left\| \|u \|_{L_y^{r_1' p} H_x^1 }^p  \right\|_{L_t^{q_1'} }.
\end{align*}
First, we consider $ \left\| \| u \|_{L_y^{ r_1' p} H_x^1 }^p  \right\|_{L_t^{q_1'} }$. By H\"older's inequality and Lemma \ref{le3.2v24}, we have 
\begin{align*}
\|u \|_{L_y^{ r_1' p} H_x^1 }
\le \|u \|_{L_y^{2p} H_x^1 }^\theta  \|u \|_{L^2_y H_x^1 }^{1- \theta}
\lesssim  t^{- d  \left( \frac12 - \frac1{ r_1' p}  \right) + \epsilon }  \left(  \left\| |J(t) |^s u  \right\|_{  L_y^2 H_x^1 }^\theta +  \left\| |J(t) |^s u  \right\|_{  L_y^2 H_x^1 }^{\theta (1- \eta ) } \| u \|_{L^2_y H_x^1 }^{ \theta \eta }  \right) \|u \|_{L^2_y H_x^1 }^{1 - \theta },
\end{align*}
where $\epsilon $ is sufficiently small. Since $ \left(q_1 ,  r_1 \right)$ is an admissible pair, we have $r_1 = \frac{2d q_1}{dq_1 - 4} $,
 then
\begin{align}\label{eq3.9v34}
& \quad  \left\|  \| u \|_{L_y^{r_1' p} H_x^1 }^p  \right\|_{L_t^{q_1'} } \\
 \lesssim   &
 \bigg( \int_h^\infty t^{\epsilon q_1'  p - d q_1'  p  \Big( \frac12 - \frac{ \frac12 + \frac2{d q_1} 
 }p  \Big)} \,\mathrm{d}t  \bigg)^\frac1{q_1'}
\left(  \left\| |J(t) |^s u  \right\|_{L_t^\infty L_y^2 H_x^1 }^\theta +  \left\| |J(t) |^s u  \right\|_{L_t^\infty L_y^2 H_x^1 }^{(1- \eta ) \theta }
\| u \|_{L_t^\infty L^2_y H_x^1 }^{   \eta  \theta }  \right)^p    \notag \\
\le & C(h)  \left(  \left\| |J(t) |^s u \right\|_{L_t^\infty L_y^2 H_x^1 }^\theta +  \left\| |J(t) |^s u  \right\|_{L_t^\infty L_y^2 H_x^1 }^{(1- \eta)  \theta} \|u \|_{L_t^\infty L^2_y H_x^1 }^{ \eta \theta } \right)^p, \notag
\end{align}
where we have used \eqref{eq3.4v24}. Second, we now turn to the estimate of $ \left\| |u|^{p-1}  \right\|_{L_t^{q_1'} L_y^{\tilde{r}_1'} H_x^1 }$. Similar arguments as the above estimates give
\begin{align}\label{eq3.10v34}
& \quad \left\|  |u|^{p-1}  \right\|_{L_t^{q_1'} L_y^{\tilde{r}_1' } H_x^1  } \\
\lesssim   &
\left( \int_h^\infty t^{\epsilon ( p - 1){q_1'} - d  \left( \frac12 - \frac{ 1}{ r_1' ( p-1 ) }  \right) ( p - 1){q_1'}  } \,\mathrm{d}t  \right)^\frac1{q_1'}
\left(  \left\| |J(t) |^s u  \right\|_{L_t^\infty L_y^2 H_x^1 }^\theta +   \left\| |J(t) |^s u  \right\|_{L_t^\infty L_y^2 H_x^1 }^{(1- \eta ) \theta } \| u \|_{L^\infty_t L^2_y H_x^1 }^{ \eta \theta }  \right)^{p-1} \notag  \\
\le & C(h)  \left(  \left\| |J(t) |^s u  \right\|_{L_t^\infty L_y^2 H_x^1 }^\theta +  \left\| |J(t) |^s u  \right\|_{L_t^\infty L_y^2 H_x^1 }^{(1- \eta )  \theta} \| u \|_{L_t^\infty L^2_y H_x^1 }^{  \eta  \theta }  \right)^{p-1},\notag
\end{align}
where again we have used \eqref{eq3.4v24}. Combining the estimates together, we obtain \eqref{eq3.5v24}.

\end{proof}

\begin{proof}[Proof of Theorem \ref{th1.1v24}]

By \eqref{eq3.2v24} and Lemma \ref{le3.4v24}, we have
\begin{align*}
&  \left\| |J(t) |^s u  \right\|_{L_t^\infty L_y^2 H_x^1 } \\
& \le C  \left(  \left\| |J(t) |^s u  \right\|_{L_t^\infty L_y^2 H_x^1 } + \|u \|_{L_t^\infty L^2_y H_x^1  }  \right)
\left(  \left\| |J(t) |^s u  \right\|_{L_t^\infty L_y^2 H_x^1 }^\theta +
\left\| |J(t) |^s u  \right\|_{L_t^\infty L_y^2 H_x^1 }^{\theta (1- \eta ) } \|u \|_{L_t^\infty L^2_y H_x^1 }^{  \eta  \theta}  \right)^{p-1}\\
& \quad + C(h )
\left(  \left\| |J(t)  |^s u  \right\|_{L_t^\infty L_y^2 H_x^1 }^\theta +  \left\| |J(t) |^s u  \right\|_{L_t^\infty L_y^2 H_x^1 }^{\theta (1 -  \eta ) } \|u \|_{L_t^\infty L^2_y H_x^1 }^{ \eta  \theta }  \right)^p
+ C \|u_0 \|_{\Sigma} + C(h)   \left\| |J(t) |^s u  \right\|_{L_t^\infty L_y^2 H_x^1 }.
\end{align*}
Since $\lim\limits_{h \to \infty } C(h) = 0$, by standard continuity argument, we have for $h$ large enough and $\|u_0 \|_\Sigma$ small enough that
\begin{align}\label{eq3.11v34}
\left\| |J(t) |^s u  \right\|_{L_t^\infty L_y^2 H_x^1 } \le C.
\end{align}
We then have the decay estimate \eqref{eq1.3v24} by Lemma \ref{le3.2v24}.

Finally, we give the proof of scattering as a consequence of \eqref{eq1.3v24}. From Duhamel's principle, it suffices to prove
\begin{align*}
\left\|\int_h^\infty e^{-i s \Delta_{\mathbb{R}^d \times \mathbb{T}} } \left( |u|^{p-1} u \right)(s) \,\mathrm{d}s \right\|_{H^1} \le C.
\end{align*}
By Strichartz estimate, we have
\begin{align*}
\left\| \int_h^\infty e^{-i s \Delta_{\mathbb{R}^d \times \mathbb{T} }}  \left(   |u|^{p-1} u  \right)(s ) \,\mathrm{d}s  \right\|_{H_{y,x}^1}
\lesssim &  \left\|   |u|^{p-1} u  \right\|_{L_t^{q_1'}  L_y^{r_1'} H_x^1 } +  \left\| ( - \Delta_y )^\frac12  \left(   |u|^{p-1} u  \right)  \right\|_{L_t^{q_1'} L_y^{r_1'}  L_x^2 } \\
\lesssim &  \left\|   |u|^{p- 1} u  \right\|_{L_t^{q_1'}  L_y^{r_1'} H_x^1 } +  \|u \|_{L_t^\infty H_y^1 L_x^2 }  \left\| |u|^{p-1}  \right\|_{L_t^{q_1'}  L_y^{\tilde{r}_1' } H_x^1 },
\end{align*}
where $\tilde{r}_1 =  \frac{2r_1}{r_1 + 2} $. 
The argument of the proof of Lemma \ref{le3.4v24} implies
\begin{align*}
 \left\| |u|^{p-1} u  \right\|_{L_t^{q_1'}  L_y^{r_1'} H_x^1 } +    \left\| |u|^{p-1}   \right\|_{L_t^{q_1'}  L_y^{\tilde{r}_1' } H_x^1 } \le C.
\end{align*}
We then define
\begin{align*}
u_+ = e^{- i h \Delta_{\mathbb{R}^d \times \mathbb{T}}  } u_0 - i \int_h^\infty e^{-i \tau \Delta_{\mathbb{R}^d \times \mathbb{T}}  }  \left(   |u|^{p-1} u \right)( \tau ) \,\mathrm{d}\tau,
\end{align*}
and
\begin{align*}
\left\|e^{-i t \Delta_{\mathbb{R}^d \times \mathbb{T}} } u(t) - u_+  \right\|_{H^1} \to 0, \text{ as } t\to \infty,
\end{align*}
which yields the scattering.

\end{proof}


\begin{thebibliography}{99}

\bibitem{BGTV} N. Burq, V. Georgiev, N. Tzvetkov and N. Visciglia, $H^1$ Scattering for Mass-Subcritical NLS
with Short-Range Nonlinearity and Initial
Data in $\Sigma$, Ann. Henri Poincar\'e, 24 (2023), 1355-1376.

\bibitem{CGYZ} X. Cheng, Z. Guo, K. Yang, and L. Zhao, \emph{On scattering for the cubic defocusing nonlinear Schr\"odinger equation on the waveguide $\mathbb{R}^2\times \mathbb{T}$}, Rev. Mat. Iberoam. {\bf 36} (2020), no. 4, 985-1011.

\bibitem{CGZ} X. Cheng, Z. Guo, and Z. Zhao, \emph{On scattering for the defocusing quintic nonlinear Schr\"odinger equation on the two-dimensional cylinder}, SIAM J. Math. Anal. {\bf 52} (2020), no. 5, 4185-4237.


\bibitem{HPTV} Z. Hani, B. Pausader, N. Tzvetkov, and N. Visciglia, \emph{Modified scattering for the cubic Schr\"odinger equation on product spaces and applications}, Forum of Mathematics, PI. (2015), Vol. 3, 1-63.


\bibitem{MS}
H. P. McKean and J. Shatah, \emph{The nonlinear Schr\"odinger equation and the nonlinear heat equation reduction to linear form},
Comm. Pure Appl. Math. {\bf 44} (1991), no. 8-9, 1067-1080.


\bibitem{MSZ}
C. Miao, X. Su, and J. Zheng, \emph{The $ W^{s,p}-$boundedness of stationary wave operators for the Schr\"odinger operator with inverse-square potential},
Trans. Amer. Math. Soc. {\bf 376} (2023), no. 3, 1739-1797.

\bibitem{MZZ}
H. Mizutani, J. Zhang, and J. Zheng, \emph{Uniform resolvent estimates for Schr\"odinger operator with an inverse-square potential},
J. Funct. Anal. {\bf 278} (2020), no. 4, 108350, 29 pp.



\bibitem{St} W. A. Strauss, \emph{Nonlinear scattering theory}, Scattering theory in mathematical physics, edited by J. A. Lavita and J-P. Marchand (Reidel, Dordrecht, Holland, 1974), 53-78.

\bibitem{TT} H. Takaoka and N. Tzvetkov, \emph{On 2D nonlinear Schr\"odinger equations with data on $\mathbb{R}\times \mathbb{T}$}, J. Funct. Anal. {\bf 182}(2001), 427-442.


\bibitem{TV2} N. Tzvetkov and N. Visciglia, \emph{Well-posedness and scattering for NLS on $\mathbb{R}^d\times \mathbb{T}$ in the energy space},  Rev. Mat. Iberoam. {\bf 32} (2016), no. 4, 1163-1188.

\bibitem{Z2} Z. Zhao, \emph{On scattering for the defocusing nonlinear Schr\"odinger equation on waveguide $\mathbb{R}^m \times \mathbb{T}$
(when $m = 2, 3$)}, J. Differential Equations {\bf 275 } (2021), 598-637.
\end{thebibliography}
\end{document}